\numberwithin{equation}{section}
\numberwithin{figure}{section}
\numberwithin{table}{section}
\numberwithin{algorithm}{section}
\newcommand{\INDSTATE}[1][1]{\STATE\hspace{#1\algorithmicindent}}
\newlength{\widthInput}
\newlength{\widthOutput}
\newcommand*\xbar[1]{
	\hbox{
		\vbox{
			\hrule height 0.5pt 
			\kern0.5ex
			\hbox{
				\kern-0.35em
				\ensuremath{#1}
				\kern-0em
			}
		}
	}
}
\newcommand{\bx}{\ensuremath{\boldsymbol{{x}}}}
\newcommand{\bbx}{\ensuremath{\xbar{\boldsymbol{{\ x}}}}}
\DeclareMathOperator*{\argmax}{argmax}
\DeclareMathOperator*{\argmin}{argmin}
\DeclareMathOperator*{\range}{range}
\newtheorem{remark}{Remark}[section]
\newtheorem{prop}{Proposition}[section]
\newtheorem{theorem}{Theorem}[section]
\newtheorem{lemma}{Lemma}[section]
\theoremstyle{definition}
\newtheorem{definition}{Definition}[section]
\DeclareMathOperator{\tr}{tr}
\begin{document}
  

\title{Subspace-Distance-Enabled Active Learning for Efficient Data-Driven Model Reduction of Parametric Dynamical Systems}
  
\author[$\ast$]{Harshit Kapadia}
\affil[$\ast$]{Max Planck Institute for Dynamics of Complex Technical Systems, 39106 Magdeburg, Germany.\authorcr
  Corresponding author, \email{kapadia@mpi-magdeburg.mpg.de}, \orcid{0000-0003-3214-0713}}

\author[$\dagger$]{Peter Benner}
\affil[$\dagger$]{Max Planck Institute for Dynamics of Complex Technical Systems, 39106 Magdeburg, Germany.\authorcr
	\email{benner@mpi-magdeburg.mpg.de}, \orcid{0000-0003-3362-4103}}

\author[$\ddagger$]{Lihong Feng}
\affil[$\ddagger$]{Max Planck Institute for Dynamics of Complex Technical Systems, 39106 Magdeburg, Germany.\authorcr
	\email{feng@mpi-magdeburg.mpg.de}, \orcid{0000-0002-1885-3269}}
  
\shorttitle{Subspace-Distance-Enabled Active Learning} 
\shortauthor{H. Kapadia, P. Benner, L. Feng}
\shortdate{}
  
\keywords{Active Learning, Subspace Distance, Surrogate Modeling, Model Order Reduction, Proper Orthogonal Decomposition, Parametric Dynamical Systems}

\abstract{%
	In situations where the solution of a high-fidelity dynamical system needs to be evaluated repeatedly, over a vast pool of parametric configurations and in absence of access to the underlying governing equations, data-driven model reduction techniques are preferable. We propose a novel active learning approach to build a parametric data-driven reduced-order model (ROM) by greedily picking the most important parameter samples from the parameter domain. As a result, during the ROM construction phase, the number of high-fidelity solutions dynamically grow in a principled fashion. The high-fidelity solution snapshots are expressed in several parameter-specific linear subspaces, with the help of proper orthogonal decomposition (POD), and the relative distance between these subspaces is used as a guiding mechanism to perform active learning. For successfully achieving this, we provide a distance measure to evaluate the similarity between pairs of linear subspaces with different dimensions, and also show that this distance measure is a metric. The usability of the proposed subspace-distance-enabled active learning (SDE-AL) framework is demonstrated by augmenting two existing non-intrusive reduced-order modeling approaches, and providing their active-learning-driven (ActLearn) extensions, namely, SDE-ActLearn-POD-KSNN, and SDE-ActLearn-POD-NN. Furthermore, we report positive results for two parametric physical models, highlighting the efficiency of the proposed SDE-AL approach.
}

\novelty{%
	\begin{itemize}
		\item SDE-AL: A new paradigm to perform active learning, in the context of parametric dynamical systems, to build efficient data-driven reduced-order models. 
		\item The active learning framework relies on measuring the similarity between parametric solution fields by evaluating the distance between parameter-specific POD subspaces that encompasses the respective solution behavior.
		\item A new distance metric tailored to this task is developed, which is capable of evaluating the distance between linear subspaces of different dimensions in an efficient fashion.  
		\item The proposed active learning framework is general-purpose and adaptable to any snapshot-based parametric surrogate modeling technique.
	\end{itemize}
}

\maketitle

  
\section{Introduction}%
\label{sec:intro}

Accurate numerical simulation of physical systems is crucial to predict the system dynamics, carry out control tasks, or undertake design optimization. The high-fidelity solvers based on the numerical discretization of partial differential equations (PDEs), such as the finite volume method or the finite element method, can become prohibitively expensive for high-dimensional problems. This limits their usage, particularly in scenarios requiring real-time predictions. Additionally, if the dynamical system is also parametric in nature, the situation can exacerbate when multiple queries of the high-fidelity solver for varying parameter configurations are required.

To tackle the high computational cost associated with many-query scenarios, there has been significant research and development in designing parametric model reduction strategies \cite{morQuaR14,morBenGW15,morBenOPetal17,morBenOCetal17,morHesRS16-NEW,morQuaMN16-updated,MadP21}. Among them, the reduced basis methods (RBMs) \cite{morHesRS16-NEW,morQuaMN16-updated,MadP21} aim at accelerating the computational speed for each parameter query by projecting the underlying governing equations onto a reduced basis space, thereby, requiring to solve only a reduced set of equations. The online query phase is accelerated by introducing a computationally intensive offline phase, where the reduced bases are iteratively adapted along with a sequential collection of high-fidelity solutions corresponding to important parameter locations. The parameter locations at which the high-fidelity solver is queried are typically decided by using a residual-based error estimator such that the accuracy of the resulting ROM improves in an optimal fashion during the offline phase.

For realizing intrusive model reduction techniques like the RBM, it is imperative to undertake problem-specific implementation, which can amount to a significant overhead for the user. Moreover, for dynamical systems that are primarily simulated by employing commercial software packages, it could not always be possible to access the internal implementation of the numerical solver, limiting the usability of intrusive model reduction approaches, such as the RBM. To address this issue, in recent times, there has been a considerable body of work~\cite{morIonA14,morKutBBetal16,morPehW16,morXiaFN17,morGuoH18,morHesU18,morWanHR19,morKasGH20,morYilGBetal20,MauLB21,morFreDM21-updated,FreM22,OrtDR22,McQSKetal23} proposing non-intrusive model reduction techniques that do not need access to the detailed implementation of the high-fidelity solver, but only require the data generated by it.

In the parametric setting, non-intrusive ROMs are typically built by first querying the high-fidelity solver at a fine set of parameter locations to gather the solution snapshots. Working with such a static set of solution snapshots often requires one to conservatively oversample the parameter space in order to ensure a reasonable accuracy level for the ROM solution. Unfortunately, this inflates the high-fidelity solver queries and greatly burdens the offline phase computations. In literature, limited attention has been paid to addressing this issue. Our work precisely attempts to bridge this gap for non-intrusive parametric model reduction techniques by proposing a novel active learning framework that enables an efficient adaptive sampling of the parameter space during the offline phase.

\subsection{Relation to Prior Work on Active Learning}

The existing active learning approaches in the context of data-driven model reduction for parametric systems can be broadly classified into two categories: Error-estimator-based active learning~\cite{morCheHJetal18,morZhuHBetal22,morKapFB24}, and uncertainity-minimization-based active learning~\cite{morGuoH18,morKasGH20}. The error-estimator-based active learning strategies attempt to estimate the non-intrusive ROM error across the parameter domain, thereby, picking new parameter samples in a greedy fashion, at locations corresponding to the highest estimated error values. In order to construct such an error estimator which can be queried at unseen parameter locations, an interpolant is constructed using the ROM error values corresponding to training parameter locations~\cite{morZhuHBetal22,morKapFB24}. However, the error behavior across the parameter domain is usually highly irregular, posing a significant challenge to perform accurate interpolation. This was handled in our prior work~\cite{morKapFB24} by a specific design of the error estimator and by performing interpolation using a kernel-based shallow neural network (KSNN) with the Mat\'ern class of kernels, trained via an alternating dual-staged iterative training procedure for improved accuracy. The repeated construction of the error estimator at each iteration of the active learning procedure results in an additional computational overhead in the offline phase. Moreover, without restriction to a certain class of model reduction methods~\cite{morKapFB24}, it is in general necessary to iteratively construct the ROM at every active learning iteration, which could further result in additional computational efforts.

The uncertainity-minimization-based active learning approach relies on evaluating the uncertainity associated with the ROM predictions across the parameter domain. More specifically, authors in~\cite{morGuoH18,morKasGH20} perform Gaussian process regression (GPR) over the parameter domain for approximating the reduced basis coefficients at new parameter locations. Thereby, the parameter locations associated with high variance, i.e., high standard deviation of the Gaussian process, are selected, since the prediction of the reduced basis coefficients at these locations is deemed uncertain due to a greater regression error. It is important to note that such a procedure improves the input data distribution for accurate GPR predictions, but has no connection with the underlying behavior of the function output that is being regressed. Moreover, the parametric ROM predictions in~\cite{morGuoH18,morKasGH20} are either for steady solution fields or only for a time-specific solution behavior from an unsteady dynamical system. Furthermore, this active learning strategy is inherently tied to GPR, limiting its applicability to other techniques for building non-intrusive ROMs.

There are also other noticeable works on active learning~\cite{AdcCD22,BruPV24} and reduction of parameter spaces~\cite{ConDW14,morCon15,DemTR19}. However, they do not focus on developing their ideas and applying them to the general setting of data-driven model reduction for parametric systems. Authors in~\cite{AdcCD22} provide a Christoffel adaptive sampling strategy for function approximation using deep neural networks, which can be used to approximate smooth, multivariate, output quantities of interest from parametrized PDEs by using limited data samples. Authors in~\cite{BruPV24} propose neural Galerkin schemes with active learning that allows sequential updates of a neural network parameters following the Dirac-Frenkel variational principle with an adaptive data collection strategy in time for efficient time-integration of PDEs. The active subspaces approach~\cite{ConDW14,morCon15} aims at identifying a set of important directions in the parameter/input domain by performing sensitivity analysis of any function of interest, thereby, reducing the parameter space dimensionality. Authors in~\cite{DemTR19} have used the active subspaces approach to improve the accuracy of POD modal coefficients' approximation with ridge functions. However, there is no effort to dynamically grow the training snapshot set by identification of critical locations in the parameter domain, and no notion of active learning, which is precisely the focus of our work.

\subsection{Our Contributions}

In contrast to existing approaches, we propose a new paradigm to undertake active learning, wherein important parameter locations are detected using the distance between solution subspaces. More precisely, our active learning framework relies on measuring the similarity between parameter-specific solution fields by evaluating the distance between their corresponding POD subspaces that encompasses the respective solution behavior. The active learning iterations progressively reduce the gap between parameter-specific solution subspaces. As a result, we observe that the ROM accuracy improves. It is non-trivial to compare the distance between subspaces with different dimensions. We provide a detailed discussion regarding this and also propose a new distance measure and show that it is a metric. Moreover, this distance measure can be repeatedly evaluated for numerous subspace pairs in an computationally efficient fashion, which makes it possible to be efficiently used during the active learning setup. To the best of our knowledge, such an active learning approach, referred to as the subspace-distance-enabled active learning (SDE-AL), has not been previously investigated.

The proposed SDE-AL framework can be deployed in two ways:
\begin{enumerate*}[label=(\alph*)]
	\item with a user-defined maximum feasible computational budget for parametric queries of the full-order model (FOM) solver, or
	\item with user-defined tolerance levels for the maximal subspace distance between the selected parameter locations and the maximal ROM error.
\end{enumerate*} 
For comparison with the user-specified error tolerance, the underlying ROM error can be evaluated by constructing an error estimator that provides error predictions at unseen parameter locations. In this variant of the algorithm, the error estimator is only used as a stopping criterion in the final stage of the active learning iterations. Moreover, unlike existing error-estimator-based active learning approaches, the central strategy for picking new parameter locations via SDE-AL is independent of the error estimator employed. Furthermore, the SDE-AL approach is general-purpose, meaning it can be flexibly used in conjugation with any non-intrusive reduced-order modeling method. In this article, we showcase the usability of the SDE-AL strategy for POD-KSNN \cite{morKapFB24} and POD-NN \cite{morHesU18,morWanHR19} ROMs. Their active-learning-driven counterparts are respectively referred to as SDE-ActLearn-POD-KSNN and SDE-ActLearn-POD-NN ROMs.

\subsection{Relation to Prior Work on Subspace Distance and Subspace Angles}

The usual distance between linear subspaces in the Grassmannian $Gr(p,n)$, i.e., the set of all $p$-dimensional linear subspaces in $\mathbb{R}^n$, is referred to as the geodesic distance~\cite{Wong1967}. The geodesic distance can be evaluated using the principal angles between the subspaces, which also defines a metric on the Grassmannian. There are also other notions of distance metrics on the Grassmannian~\cite{EdeAS1998,QiuXL05}. The containment gap \cite{SteS90,Kato1995} depends on the largest principal angle between subspaces. It has been used in the context of Krylov methods in~\cite{BeaES05}. Another distance metric on the Grassmannian is the chordal metric~\cite{ConHS1996,BarN02}, related to the Frobenius norm of the difference between projectors of the subspaces. In our SDE-AL framework, we are concerned with measuring the distance between linear subspaces of different dimensions. Authors in~\cite{YeL16} take an algebraic geometric view of the Grassmannian, introducing Schubert varieties, and generalizing the geodesic distance to subspaces of different dimensions. However, its evaluation necessitates explicit computation of the principal angles, requiring a singular value decomposition (SVD). During the SDE-AL procedure, we need to repeatedly evaluate the distance between numerous linear subspaces corresponding to pairs of neighboring parameter locations. Due to this, we present a distance metric that is related to the principal angles between subspaces, but does not require their explicit computation, making it computationally inexpensive and favorable for repeated evaluations. Moreover, this distance metric reduces to the chordal distance metric when the subspaces are of the same dimension, which is explained in \Cref{sec:subspace-similarity}.

In the context of reduced-order modeling, there has been prior work utilizing the subspace angles~ \cite{LieL04,LieFL05,LieF05,LieFL06,LieF07}, i.e., the principal angles between the subspaces. In their work, the authors adapt the POD basis vectors for varying parameter locations by interpolating the angles between POD subspaces. Later, authors in~\cite{AmsFL07,morAmsF08,AmsCCetal09} show an improvement in the accuracy of POD subspace adaptation by performing interpolation along the tangent space of a Grassmannian containing the set of all parameter-specific POD subspaces. All the works discussed so far focus on interpolation of POD basis vectors, and not on performing active learning for an adaptive selection of informative parameter locations. Authors in~\cite{AmsTF16} evaluate the angles between POD subspaces  to check for consistency between them, truncating the directions associated with large angles, for improving the interpolation between local parameter-specific ROMs. However, they deal with subspaces possessing the same dimension, focus on linear dynamical systems, and do not employ the subspace angles to perform active learning. An adaptive snapshot selection strategy is presented in \cite{morBenFLetal15}, where they evaluate the angle between subsequent solution snapshots along a time-trajectory to discard redundant, linearly dependent, snapshots. This can be interpreted as a parameter-specific adaptive snapshot sampling approach in the time domain. However, it cannot be used for parameter sampling.

\subsection{Organization}
\label{subsec:orga}

The remainder of the article is structured in the following fashion. \Cref{sec:subspace-similarity} provides a detailed exposition on a new strategy to measure the similarity between linear subspaces of different dimensions, which is essentially the backbone of the new active learning strategy. This is followed by outlining the complete SDE-AL framework in \Cref{sec:sde-actlearn} for constructing efficient non-intrusive parametric ROMs. Next, in \Cref{sec:existing-roms}, we present the SDE-ActLearn-POD-KSNN and SDE-ActLearn-POD-NN ROMs. \Cref{sec:num-exp} provides numerical tests detailing the performance of the proposed active learning approach. Later, we end by presenting some conclusions in \Cref{sec:conclude}.


\section{Measures of Similarity Between Linear Subspaces} \label{sec:subspace-similarity}

Consider two linear subspaces $\mathcal{X}$ and $\mathcal{Y}$ of $\mathbb{R}^n$ with their respective orthonormal bases given by the columns of matrices $X \in \mathbb{R}^{n \times p}$ and $Y \in \mathbb{R}^{n \times q}$. Consider the SVD of the matrix with inner products between the two sets of orthonormal bases, i.e., $X^\top Y = W_l \Sigma_{XY} W_r^\top$. Here, $W_l$ and $W_r$ are orthogonal matrices and $\Sigma_{XY}$ is a $p \times q$ diagonal matrix with singular values $\sigma^{XY}_1, \dots, \sigma^{XY}_s$ in decreasing order and $s = \min(p,q)$. The cosines of the principal angles between the subspaces are given by \cite{BjoG1973,ZhuK13},
\begin{equation} \label{eqn:relate-cosine-sing-val}
	\cos\boldsymbol{\Theta}(\mathcal{X},\mathcal{Y}) = \boldsymbol{\sigma}(X^\top Y) = [\sigma^{XY}_1, \dots, \sigma^{XY}_s],
\end{equation}
where $\boldsymbol{\Theta}(\mathcal{X},\mathcal{Y})$ denotes the vector of principal angles $\{\theta_k\}_{k=1}^{k=s}$ between $\mathcal{X}$ and $\mathcal{Y}$ arranged in increasing order, and $\boldsymbol{\sigma}(X^\top Y)$ denotes the vector of singular values of $X^\top Y$. We can use the $2$-norm of the vector of principal angles as a measure of similarity between two subspaces,  
\begin{equation} \label{eqn:distance-1}
	\mathcal{D}_1(\mathcal{X},\mathcal{Y}) := \| \boldsymbol{\Theta}(\mathcal{X},\mathcal{Y}) \|_2 = \| \arccos \boldsymbol{\sigma}(X^\top Y) \|_2,
\end{equation}
where $\mathcal{D}_1(\mathcal{X},\mathcal{Y})$ can be understood as a distance measure between the subspaces $\mathcal{X}$ and $\mathcal{Y}$.

To ease further discussions and assist the reader, we formally define what we mean by a metric, premetric, vector norm, and matrix norm. For more details, refer to the exposition in \cite{Rol87,HorJ85,SteS90}. 

\begin{definition}[Metric] \label{def:metric}
	Let $S$ be a set. A \emph{metric} on $S$ is a function $\rho$ of two arguments, defined on $S$, where $s_1, s_2, s_3 \in S$, which satisfies the following conditions:
	\begin{enumerate}
		\item $\rho(s_1,s_2) \ge 0$ (non-negativity),
		\item $\rho(s_1,s_2) = 0$ if and only if $s_1 = s_2$ (identity of indiscernibles),
		\item $\rho(s_1,s_2) = \rho(s_2,s_1)$ (symmetry),
		\item $\rho(s_1,s_2) \le \rho(s_1,s_3) + \rho(s_3,s_2)$ (triangle inequality).
	\end{enumerate}
\end{definition}

\begin{definition}[Premetric] \label{def:premetric}
	Let $S$ be a set. A \emph{premetric} on $S$ is a function $\rho$ of two arguments $s_1, s_2 \in S$, which satisfies the following conditions:
	\begin{enumerate}
		\item $\rho(s_1,s_2) \ge 0$ (non-negativity),
		\item $\rho(s_1,s_1) = 0$ for all $s_1 \in S$.
	\end{enumerate}
\end{definition}

\begin{definition}[Vector norm] \label{def:vector-norm}
	Let $V$ be a vector space over $\mathbb{R}$. A function $\| \cdot \|: V \rightarrow \mathbb{R}$ is a \emph{vector norm} if for all $v_1, v_2 \in V$, the following axioms are satisfied:
	\begin{enumerate}
		\item $\|v_1\| \ge 0$ (non-negative),
		\item $\|v_1\| = 0$ if and only if $v_1 = 0$ (positive),
		\item $\|c v_1\| = |c| \|v_1\|$ for all scalars $c \in \mathbb{R}$ (homogeneous),
		\item $\|v_1 + v_2\| \le \|v_1\| + \|v_2\|$ (triangle inequality).
	\end{enumerate}
\end{definition}

\begin{definition}[Matrix norm] \label{def:matrix-norm}
	Let $M$ be a space of matrices over $\mathbb{R}^{m \times n}$. A function $\| \cdot \|: M \rightarrow \mathbb{R}$ is a \emph{matrix norm} if for all $A, B \in M$ the following axioms are satisfied:
	\begin{enumerate}
		\item $\|A\| \ge 0$ (non-negative),
		\item $\|A\| = 0$ if and only if $A = 0$ (positive),
		\item $\|c A\| = |c| \|A\|$ for all scalars $c \in \mathbb{R}$ (homogeneous),
		\item $\|A + B\| \le \|A\| + \|B\|$ (triangle inequality),
		\item $\|A B\| \le \|A\| \|B\|$ (submultiplicative).
	\end{enumerate}
\end{definition}

\begin{prop} \label{prop:premetric-not-metric}
	If $\mathcal{X}$ and $\mathcal{Y}$ are linear subspaces with their respective orthonormal bases given by the columns of matrices $X \in \mathbb{R}^{n \times p}$ and $Y \in \mathbb{R}^{n \times q}$, then the distance measure $\mathcal{D}_1(\mathcal{X},\mathcal{Y})$ from \cref{eqn:distance-1} is a premetric for $p \ne q$, but in general not a metric.
\end{prop}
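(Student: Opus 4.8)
The plan is to verify the two premetric axioms of \Cref{def:premetric} directly, and then to construct an explicit counterexample showing that the identity-of-indiscernibles axiom of \Cref{def:metric} fails when $p \ne q$. Non-negativity is immediate, since $\mathcal{D}_1(\mathcal{X},\mathcal{Y})$ is by definition the $2$-norm of a real vector and is therefore automatically $\ge 0$. What remains is to check the self-distance condition, and afterwards to demonstrate the breakdown of the metric property.

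For the self-distance condition, I would argue that the principal angles of any subspace with itself all vanish. Concretely, if $\mathcal{Y} = \mathcal{X}$, then any orthonormal basis $Y$ of $\mathcal{Y}$ is related to $X$ by $Y = XQ$ for some orthogonal matrix $Q$, whence $X^\top Y = X^\top X Q = Q$. Because the singular values of an orthogonal matrix are all equal to $1$, we get $\boldsymbol{\sigma}(X^\top Y) = [1,\dots,1]$ and hence $\arccos \boldsymbol{\sigma}(X^\top Y) = \boldsymbol{0}$, so that $\mathcal{D}_1(\mathcal{X},\mathcal{X}) = 0$. Combined with non-negativity, this shows $\mathcal{D}_1$ is a premetric.

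To show that $\mathcal{D}_1$ is in general not a metric when $p \ne q$, I would exhibit two distinct subspaces at distance zero, violating condition~2 of \Cref{def:metric}. Assume without loss of generality that $p < q$, and take $\mathcal{X}$ to be a proper subspace of $\mathcal{Y}$, i.e.\ $\mathcal{X} \subset \mathcal{Y}$ with $\mathcal{X} \ne \mathcal{Y}$; such a nesting is possible precisely because the dimensions differ. I would then write the orthonormal basis of $\mathcal{Y}$ in the block form $Y = [\,X \mid X'\,]$, where the columns of $X'$ extend $X$ to an orthonormal basis of $\mathcal{Y}$ and are orthogonal to the columns of $X$. This yields $X^\top Y = [\,X^\top X \mid X^\top X'\,] = [\,I_p \mid 0\,]$, a matrix whose $s = \min(p,q) = p$ singular values all equal $1$. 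Hence every principal angle vanishes and $\mathcal{D}_1(\mathcal{X},\mathcal{Y}) = 0$ despite $\mathcal{X} \ne \mathcal{Y}$.

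The crux of the argument, and the only part that is not purely routine, is identifying the correct failure mode: when the dimensions differ, a nested pair of subspaces produces $s = \min(p,q)$ vanishing principal angles, so $\mathcal{D}_1$ cannot separate a lower-dimensional subspace from any higher-dimensional subspace containing it. The rest rests on the elementary facts that the $2$-norm is non-negative and that orthogonal and partial-identity matrices have unit singular values. Since the identity-of-indiscernibles axiom fails in this regime, $\mathcal{D}_1$ is not a metric for $p \ne q$, which completes the argument.
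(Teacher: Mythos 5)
Your proof is correct, but it departs from the paper's argument in the part that shows $\mathcal{D}_1$ fails to be a metric: you attack axiom~2 of \Cref{def:metric} (identity of indiscernibles), exhibiting a nested pair $\mathcal{X} \subsetneq \mathcal{Y}$ with $X^\top Y = [\,I_p \mid 0\,]$ and hence distance zero between distinct subspaces, whereas the paper attacks axiom~4 (the triangle inequality): it takes two \emph{distinct one-dimensional} subspaces $\mathcal{X}_1, \mathcal{X}_2$ contained in a two-dimensional $\mathcal{Y}_1$, notes $\mathcal{D}_1(\mathcal{X}_1,\mathcal{Y}_1) = \mathcal{D}_1(\mathcal{X}_2,\mathcal{Y}_1) = 0$, and derives the contradiction $\mathcal{D}_1(\mathcal{X}_1,\mathcal{X}_2) = 0$ for subspaces of \emph{equal} dimension, where $\mathcal{D}_1$ is known to be positive (it is the geodesic distance, cf.\ \Cref{remark:geodesic-distance}). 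Both counterexamples rest on the same underlying fact — containment forces all $\min(p,q)$ principal angles to vanish — which you prove explicitly by block decomposition and the paper asserts from the definition of principal angles. Your route is the more elementary and self-contained one: a single pair of subspaces and an explicit matrix computation settle the claim. The paper's route buys something different: it shows the degeneracy is not merely a labeling problem that could be repaired by identifying nested subspaces, since the zero distances across dimensions contaminate the triangle inequality and force a contradiction even among same-dimensional subspaces where $\mathcal{D}_1$ behaves as a genuine metric. Also note that your premetric verification ($Y = XQ$, $Q$ orthogonal, unit singular values) is a constructive version of what the paper disposes of in one line via the equivalence $\mathcal{D}_1(\mathcal{X},\mathcal{Y}) = 0 \iff \mathcal{X} \subseteq \mathcal{Y}$ or $\mathcal{Y} \subseteq \mathcal{X}$; both are sound.
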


\begin{proof}
	Let us start by noting that we are concerned with the situation when $p \ne q$. As $\mathcal{D}_1(\cdot,\cdot)$ is computed by a vector norm, non-negativity is trivially ensured. Also, since the distance measure is defined using the principal angles between subspaces, the following holds:
	\begin{equation*}
		\mathcal{D}_1(\mathcal{X},\mathcal{Y}) = 0 \iff \mathcal{X} \subseteq \mathcal{Y} \text{ or } \mathcal{Y} \subseteq \mathcal{X}. 
	\end{equation*}
	This is a stronger statement that clearly includes the situation $\mathcal{X}=\mathcal{Y}$, which results in $\mathcal{D}_1(\mathcal{X},\mathcal{Y}) = 0$. Hence, the distance measure $\mathcal{D}_1(\cdot,\cdot)$ satisfies both the properties enlisted in \Cref{def:premetric}, and is a premetric for $p \ne q$. 
	
	Consider $\mathcal{X}_1, \mathcal{X}_2$ to be two distinct linear subspaces with respective orthonormal bases $X_1, X_2 \in \mathbb{R}^{n \times 1}$. Let $\mathcal{Y}_1$ be a linear subspace with bases $Y_1 \in \mathbb{R}^{n \times 2}$ such that $\mathcal{X}_1, \mathcal{X}_2 \subset \mathcal{Y}_1$. Assuming that the triangle inequality, property 4 in \Cref{def:metric}, is satisfied, we can write the following:
	\begin{equation*}
		\mathcal{D}_1(\mathcal{X}_1,\mathcal{X}_2) \le \mathcal{D}_1(\mathcal{X}_1,\mathcal{Y}_1) + \mathcal{D}_1(\mathcal{X}_2,\mathcal{Y}_1).
	\end{equation*} 
	Due to $\mathcal{X}_1, \mathcal{X}_2 \subset \mathcal{Y}_1$, we know that $\mathcal{D}_1(\mathcal{X}_1,\mathcal{Y}_1) = \mathcal{D}_1(\mathcal{X}_2,\mathcal{Y}_1) = 0$. Following the triangle inequality, this renders $\mathcal{D}_1(\mathcal{X}_1,\mathcal{X}_2) = 0$, which is a contradiction. This proves that $\mathcal{D}_1(\cdot,\cdot)$ does not satisfy the triangle inequality for $p \ne q$, and hence it is not a metric, but only a premetric as shown before.
\end{proof}

Note that for the example in the proof above, we considered linear subspaces of dimensions $1$ and $2$, resulting in distance evaluations between subspaces with $p=1$, $q=2$, as well as with $p=q=1$. Please refer \Cref{remark:geodesic-distance} for the scenario where all distance evaluations between linear subspaces are restricted to subspaces with $p=q$.

\begin{remark}[Geodesic distance]
	\label{remark:geodesic-distance}
	The distance measure $\mathcal{D}_1(\mathcal{X},\mathcal{Y})$ is a metric when $p=q$, and is commonly referred to as the geodesic distance~\cite{Wong1967} between linear subspaces in the Grassmannian Gr(p,n), i.e., the set of all $p$-dimensional linear subspaces in $\mathbb{R}^n$.
\end{remark}

Consider the Frobenius norm of $X^\top Y$ \cite{HorJ85,SteS90},
\begin{equation*}
	\| X^\top Y \|_F = \sqrt{\sum_{i=1}^{p}\sum_{j=1}^{q} |\mathbf{x}_i^\top \mathbf{y}_j|^2} = \sqrt{\tr((X^\top Y)^\top (X^\top Y))}.
\end{equation*}
Upon performing the SVD of $X^\top Y$, we obtain,
\begin{align*}
	\| X^\top Y \|_F &= \sqrt{\tr(W_l \Sigma_{XY} W_r^\top)^\top (W_l \Sigma_{XY} W_r^\top))}
	= \sqrt{\tr(W_r \Sigma_{XY}^\top \Sigma_{XY} W_r^\top)}	
	= \sqrt{\sum_{k=1}^{s} (\sigma^{XY}_k)^2}.
\end{align*}
Using \cref{eqn:relate-cosine-sing-val} it can be shown that $\| X^\top Y \|_F$ is related to the principal angles between the subspaces $\mathcal{X}$ and $\mathcal{Y}$,
\begin{equation} \label{eqn:frobenius-norm-angle-relation}
	\| X^\top Y \|_F = \sqrt{\sum_{k=1}^{s} (\cos \theta_k)^2}.
\end{equation}
Based on \cref{eqn:frobenius-norm-angle-relation} we are motivated to employ the following criterion to measure the similarity between subspaces,
\begin{equation} \label{eqn:distance-fro-norm}
	\tilde{\mathcal{D}}(\mathcal{X},\mathcal{Y}) := \| X^\top Y \|_F.
\end{equation}
Here, we do not need to perform the SVD of $X^\top Y$, unlike in \cref{eqn:distance-1}. 

\begin{lemma} \label{lemma:1}
	If $\mathcal{X}$ and $\mathcal{Y}$ are linear subspaces with their respective orthonormal bases given by the columns of matrices $X \in \mathbb{R}^{n \times p}$ and $Y \in \mathbb{R}^{n \times q}$, then the similarity measure $\tilde{\mathcal{D}}(\mathcal{X},\mathcal{Y})$ from \cref{eqn:distance-fro-norm} is related to the orthogonal projectors $P_\mathcal{X}$ and $P_\mathcal{Y}$ onto $\mathcal{X}$ and $\mathcal{Y}$ in the following way:
	\begin{equation} \label{eqn:lemma1-result}
		\tilde{\mathcal{D}}(\mathcal{X},\mathcal{Y}) = \sqrt{\frac{1}{2} \big( p + q - \tr[(P_\mathcal{X} - P_\mathcal{Y})^2] \big)},
	\end{equation}
	where the orthogonal projectors are defined as $P_\mathcal{X} := X X^\top$ and $P_\mathcal{Y} := Y Y^\top$.
\end{lemma}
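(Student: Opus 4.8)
The plan is to work entirely at the level of traces, exploiting that $P_\mathcal{X} = XX^\top$ and $P_\mathcal{Y} = YY^\top$ are orthogonal projectors and are therefore symmetric and idempotent, i.e. $P_\mathcal{X}^2 = P_\mathcal{X}$ and $P_\mathcal{Y}^2 = P_\mathcal{Y}$. The target identity in \cref{eqn:lemma1-result} involves $\tr[(P_\mathcal{X} - P_\mathcal{Y})^2]$, so the natural first move is to expand this square and reduce every resulting term to something elementary. I would not touch the SVD of $X^\top Y$ at all; the whole point of $\tilde{\mathcal{D}}$ is to avoid it, and the cleanest route is purely through the projector algebra together with the cyclic invariance of the trace.

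First I would expand
\begin{equation*}
	(P_\mathcal{X} - P_\mathcal{Y})^2 = P_\mathcal{X}^2 - P_\mathcal{X} P_\mathcal{Y} - P_\mathcal{Y} P_\mathcal{X} + P_\mathcal{Y}^2 = P_\mathcal{X} - P_\mathcal{X} P_\mathcal{Y} - P_\mathcal{Y} P_\mathcal{X} + P_\mathcal{Y},
\end{equation*}
using idempotency in the second equality. Taking the trace and using linearity gives four terms. The two diagonal terms are immediate: since $X$ has orthonormal columns, $\tr(P_\mathcal{X}) = \tr(XX^\top) = \tr(X^\top X) = \tr(I_p) = p$, and likewise $\tr(P_\mathcal{Y}) = q$. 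The two cross terms are equal by the cyclic property of the trace, $\tr(P_\mathcal{X} P_\mathcal{Y}) = \tr(P_\mathcal{Y} P_\mathcal{X})$, so together they contribute $-2\tr(P_\mathcal{X} P_\mathcal{Y})$.

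The key identification, and the one step worth stating carefully, is that the cross term is exactly $\tilde{\mathcal{D}}(\mathcal{X},\mathcal{Y})^2$. Applying cyclic invariance once more,
\begin{equation*}
	\tr(P_\mathcal{X} P_\mathcal{Y}) = \tr(X X^\top Y Y^\top) = \tr\big((X^\top Y)(X^\top Y)^\top\big) = \| X^\top Y \|_F^2,
\end{equation*}
where the last equality is just the definition of the Frobenius norm already recorded above \Cref{lemma:1}. Substituting the three evaluations back yields $\tr[(P_\mathcal{X} - P_\mathcal{Y})^2] = p + q - 2\|X^\top Y\|_F^2$, and solving for $\|X^\top Y\|_F^2$ gives $\|X^\top Y\|_F^2 = \tfrac{1}{2}\big(p + q - \tr[(P_\mathcal{X} - P_\mathcal{Y})^2]\big)$; taking the nonnegative square root and recalling $\tilde{\mathcal{D}}(\mathcal{X},\mathcal{Y}) = \|X^\top Y\|_F$ completes the proof.

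I do not anticipate a genuine obstacle here, since the argument is a short chain of trace manipulations. The only point requiring care is bookkeeping: remembering to invoke idempotency before taking the trace, and then correctly collapsing $XX^\top YY^\top$ to $(X^\top Y)(X^\top Y)^\top$ via the cyclic shift rather than attempting any direct expansion in coordinates. The quantity under the square root is automatically nonnegative because it equals $\|X^\top Y\|_F^2$, so no separate positivity check is needed before extracting the root.
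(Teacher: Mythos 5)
Your proposal is correct and follows essentially the same route as the paper's proof: expand $(P_\mathcal{X}-P_\mathcal{Y})^2$ using idempotency, evaluate $\tr(P_\mathcal{X})=p$ and $\tr(P_\mathcal{Y})=q$ from the orthonormal columns, and collapse the cross term to $\|X^\top Y\|_F^2$ via cyclic invariance of the trace. The only cosmetic differences are that you invoke idempotency explicitly (the paper uses it silently when expanding the square) and you note that nonnegativity of the expression under the root is automatic, which the paper leaves unstated.
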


\begin{proof} Let us start by looking at the trace operation in the right hand side of \cref{eqn:lemma1-result},
	\begin{align*}
		\tr[(P_\mathcal{X} - P_\mathcal{Y})^2] &= \tr[(X X^\top - Y Y^\top)^2] 
		\\
		&= \tr(X X^\top + Y Y^\top - X X^\top Y Y^\top - Y Y^\top X X^\top)
		\\
		&= \tr(X X^\top) + \tr(Y Y^\top) - 2 \tr(X X^\top Y Y^\top)
		\\
		&= p + q - 2 \tr(Y^\top X X^\top Y)
		\\
		&= p + q - 2 \| X^\top Y \|_F^2
		\\
		&= p + q - 2 \tilde{\mathcal{D}}^2(\mathcal{X},\mathcal{Y}).
		\\
		\implies \tilde{\mathcal{D}}(\mathcal{X},\mathcal{Y}) &= \sqrt{\frac{1}{2} \Big( p + q - \tr[(P_\mathcal{X} - P_\mathcal{Y})^2] \Big)}.
	\end{align*}
\end{proof}

Using $\tilde{\mathcal{D}}(\mathcal{X},\mathcal{Y})$, we formulate an alternate distance measure $\mathcal{D}_2(\mathcal{X},\mathcal{Y})$:
\begin{equation} \label{eqn:distance-2}
	\mathcal{D}_2(\mathcal{X},\mathcal{Y}) := \sqrt{\max(p,q) - \tilde{\mathcal{D}}^2(\mathcal{X},\mathcal{Y})}.
\end{equation}

\begin{theorem}
	If $\mathcal{X}$ and $\mathcal{Y}$ are linear subspaces with their respective orthonormal bases given by the columns of matrices $X \in \mathbb{R}^{n \times p}$ and $Y \in \mathbb{R}^{n \times q}$, then the distance measure $\mathcal{D}_2(\mathcal{X},\mathcal{Y})$ from \cref{eqn:distance-2} is a metric for all $p, q \in \mathbb{N} \backslash \{0\}$.
\end{theorem}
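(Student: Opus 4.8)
The plan is to check the four axioms of \Cref{def:metric}, of which symmetry, non-negativity, and well-definedness are immediate, while the triangle inequality is the substantive part. First I would rewrite $\mathcal{D}_2$ so as to expose an underlying Euclidean structure. Using $\max(p,q) = \tfrac{1}{2}(p+q) + \tfrac{1}{2}|p-q|$, invoking \Cref{lemma:1}, and noting that $P_\mathcal{X}-P_\mathcal{Y}$ is symmetric so that $\tr[(P_\mathcal{X}-P_\mathcal{Y})^2] = \|P_\mathcal{X}-P_\mathcal{Y}\|_F^2$, the definition in \cref{eqn:distance-2} becomes
\begin{equation*}
	\mathcal{D}_2^2(\mathcal{X},\mathcal{Y}) = \tfrac{1}{2}\,|p-q| + \tfrac{1}{2}\,\|P_\mathcal{X}-P_\mathcal{Y}\|_F^2 .
\end{equation*}
Equivalently, via \cref{eqn:frobenius-norm-angle-relation}, $\mathcal{D}_2^2(\mathcal{X},\mathcal{Y}) = |p-q| + \sum_{k=1}^{s}\sin^2\theta_k$ with $s=\min(p,q)$, which reduces to the chordal metric when $p=q$. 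Since the radicand is a sum of two non-negative quantities, $\mathcal{D}_2$ is real and non-negative (axiom~1), and symmetry (axiom~3) is inherited from that of $P_\mathcal{X}-P_\mathcal{Y}$ and of $|p-q|$.

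For the identity of indiscernibles I would argue that the two summands vanish together exactly when $\mathcal{X}=\mathcal{Y}$: $\mathcal{D}_2(\mathcal{X},\mathcal{Y})=0$ forces $|p-q|=0$ and $\|P_\mathcal{X}-P_\mathcal{Y}\|_F=0$, hence $p=q$ and $P_\mathcal{X}=P_\mathcal{Y}$, and since an orthogonal projector determines its range, $P_\mathcal{X}=P_\mathcal{Y}$ already yields $\mathcal{X}=\mathcal{Y}$. This is precisely where $\mathcal{D}_2$ repairs the defect of $\mathcal{D}_1$: the extra term $\tfrac{1}{2}|p-q|$ prevents the distance from collapsing to zero when one subspace is strictly contained in the other.

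The hard part is the triangle inequality, because the principal angles of three subspaces of mutually different dimensions obey no convenient additive relation, so I would avoid reasoning with angles directly. Instead, my plan is to realize $\mathcal{D}_2$ as the distance induced by a genuine norm through an isometric embedding into a Euclidean space, after which the triangle inequality is automatic. Consider the map
\begin{equation*}
	\Phi(\mathcal{X}) := \Big( \tfrac{1}{\sqrt{2}}\,P_\mathcal{X},\ \tfrac{1}{\sqrt{2}}\,u_{p} \Big) \in \mathbb{R}^{n\times n} \oplus \mathbb{R}^{n}, \qquad u_p := \sum_{i=1}^{p} e_i ,
\end{equation*}
where $p=\dim\mathcal{X}$, the $e_i$ are the standard basis vectors of $\mathbb{R}^n$, and the ambient space carries the Frobenius norm on the first factor and the Euclidean norm on the second. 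The crucial observation is that these staircase vectors nest, so that for $p\ge q$ the difference $u_p-u_q$ has exactly $p-q$ nonzero entries, each equal to one, giving $\|u_p-u_q\|_2^2 = |p-q|$. Consequently
\begin{equation*}
	\| \Phi(\mathcal{X}) - \Phi(\mathcal{Y}) \|^2 = \tfrac{1}{2}\,\|P_\mathcal{X}-P_\mathcal{Y}\|_F^2 + \tfrac{1}{2}\,\|u_p-u_q\|_2^2 = \mathcal{D}_2^2(\mathcal{X},\mathcal{Y}) .
\end{equation*}

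Finally I would conclude: because $\mathcal{D}_2(\mathcal{X},\mathcal{Y}) = \|\Phi(\mathcal{X})-\Phi(\mathcal{Y})\|$ is the pullback of a norm under $\Phi$, the triangle inequality $\mathcal{D}_2(\mathcal{X},\mathcal{Z}) \le \mathcal{D}_2(\mathcal{X},\mathcal{Y}) + \mathcal{D}_2(\mathcal{Y},\mathcal{Z})$ holds verbatim for arbitrary subspaces of any dimensions $p,q,r \in \mathbb{N}\backslash\{0\}$, settling axiom~4; the same embedding re-proves axioms~1 and~3 at once and reduces the identity of indiscernibles to the injectivity of $\Phi$, which again rests on $P_\mathcal{X}$ determining $\mathcal{X}$. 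I expect the only genuine obstacle to be discovering this embedding, namely recognizing the dimension-gap term $\tfrac{1}{2}|p-q|$ as a squared Euclidean distance and encoding it with the nested staircase vectors; once that is in place, every axiom follows from elementary norm properties.
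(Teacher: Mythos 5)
Your proof is correct, and for the decisive step---the triangle inequality---it takes a genuinely different route from the paper, one that is in fact more robust. Both arguments begin identically, using \Cref{lemma:1} to write $2\mathcal{D}_2^2(\mathcal{X},\mathcal{Y}) = |p-q| + \tr[(P_\mathcal{X}-P_\mathcal{Y})^2]$. The paper then identifies this trace with $\|P_\mathcal{X}-P_\mathcal{Y}\|_F$ (its \cref{eqn:trace-to-frobenius} drops an exponent: the trace equals $\|P_\mathcal{X}-P_\mathcal{Y}\|_F^2$, as you correctly use), adds the scalar triangle inequalities for $|p-q|$ and for the Frobenius norm term by term, and finishes with $\sqrt{a+b}\le\sqrt{a}+\sqrt{b}$. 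That chain closes only because of the dropped square: with the correct exponent, the paper's intermediate claim $\mathcal{D}_2^2(\mathcal{X},\mathcal{Y})\le\mathcal{D}_2^2(\mathcal{X},\mathcal{Z})+\mathcal{D}_2^2(\mathcal{Z},\mathcal{Y})$ is false in general---for three lines in $\mathbb{R}^2$ at angles $0$, $t$, $2t$ it would require $\sin^2(2t)\le 2\sin^2(t)$, which fails for small $t$---so the paper's argument for property 4 of \Cref{def:metric} has a genuine gap, even though the theorem itself is true. Your construction avoids this entirely: by encoding the dimension gap $|p-q|$ as the squared Euclidean distance between the nested staircase vectors $u_p$, $u_q$, you exhibit $\mathcal{D}_2$ as the pullback of an inner-product norm on $\mathbb{R}^{n\times n}\oplus\mathbb{R}^n$ under the injective map $\mathcal{X}\mapsto\bigl(\tfrac{1}{\sqrt{2}}P_\mathcal{X},\,\tfrac{1}{\sqrt{2}}u_p\bigr)$, so the triangle inequality for arbitrary $p,q,r$ is inherited verbatim, and non-negativity, symmetry, and identity of indiscernibles (an orthogonal projector determines its range) follow from the same picture. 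What the paper's route would buy, once repaired---e.g., by applying Minkowski's inequality in $\mathbb{R}^2$ to the pairs $\bigl(\sqrt{|p-r|},\|P_\mathcal{X}-P_\mathcal{Z}\|_F\bigr)$ and $\bigl(\sqrt{|r-q|},\|P_\mathcal{Z}-P_\mathcal{Y}\|_F\bigr)$---is an elementary termwise computation; what yours buys is a structural statement, namely that $\mathcal{D}_2$ is a Euclidean-embeddable metric, which settles all four axioms at once and explains why the dimension-gap term is exactly what repairs the failure of $\mathcal{D}_1$ identified in \Cref{prop:premetric-not-metric}.
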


\begin{proof}
	Let us systematically ensure all the properties of a metric. Consider a scenario when the subspaces are identical, i.e., $\mathcal{X} = \mathcal{Y}$. This means that $p=q$ and $\range(X) = \range(Y)$. Due to the orthonormalized bases, $\tilde{\mathcal{D}}(\mathcal{X},\mathcal{X})=\sqrt{p}=\sqrt{q}$, which means $\mathcal{D}_2(\mathcal{X},\mathcal{Y})=0$. Hence, this would ensure the identity of indiscernibles, if we can also prove that $\mathcal{D}_2(\mathcal{X},\mathcal{Y})=0$ holds only when the subspaces are identical. We defer this part for later. Next, it is easy to see that the distance measure is symmetric,
	\begin{equation*}
		\mathcal{D}_2(\mathcal{X},\mathcal{Y}) = \sqrt{\max(p,q) - \| X^\top Y \|^2_F} = \sqrt{\max(q,p) - \| Y^\top X \|^2_F} = \mathcal{D}_2(\mathcal{Y},\mathcal{X}).
	\end{equation*}
	Moreover, $\mathcal{D}_2$ is non-negative by construction, i.e., $\mathcal{D}_2(\mathcal{X},\mathcal{Y}) \ge 0$.
	
	By taking the square of the distance measure $\mathcal{D}_2(\mathcal{X},\mathcal{Y})$ in \cref{eqn:distance-2} we can write,
	\begin{equation*}
		\mathcal{D}_2^2(\mathcal{X},\mathcal{Y}) = \max(p,q) - \tilde{\mathcal{D}}^2(\mathcal{X},\mathcal{Y}).
	\end{equation*}
	Substituting the form of $\tilde{\mathcal{D}}(\mathcal{X},\mathcal{Y})$ shown in \cref{eqn:lemma1-result}, leads to
	\begin{align}
		\mathcal{D}_2^2(\mathcal{X},\mathcal{Y}) &= \max(p,q) - \frac{1}{2} \Big( p + q - \tr[(P_\mathcal{X} - P_\mathcal{Y})^2] \Big). \nonumber
		\\
		\implies 2 \mathcal{D}_2^2(\mathcal{X},\mathcal{Y}) &= | p - q | + \tr[(P_\mathcal{X} - P_\mathcal{Y})^2]. \label{eqn:new-distance-square}
	\end{align}
	It is clear from \cref{eqn:new-distance-square} that $\mathcal{D}_2(\mathcal{X},\mathcal{Y}) = 0$, only when $p = q$ and the projectors $P_\mathcal{X} = P_\mathcal{Y}$, which is only possible when $\mathcal{X} = \mathcal{Y}$. 
	So far, we have successfully shown that properties $1-3$ from \Cref{def:metric} hold.
	
	Let $\mathbf{v}_p, \mathbf{v}_q \in \mathbb{R}^n$ be such that the first $p$ and $q$ elements for $\mathbf{v}_p$ and $\mathbf{v}_q$ respectively are ones and all others are zeros. This allows us to write the following,
	\begin{equation*}
		\| \mathbf{v}_p - \mathbf{v}_q \|_1 = | p - q |.\,
	\end{equation*}
	where $\|\cdot\|_1$ is the vector $1$-norm. Using the triangle inequality, we can obtain
	\begin{equation} \label{eqn:part1-triangle-inequality}
		| p - q | \le | p - r | + | r - q |, \ \forall r \ge 0.
	\end{equation}
	
	The second term in the right hand side of \cref{eqn:new-distance-square} can be rewritten as below,
	\begin{align} 
		\tr[(P_\mathcal{X} - P_\mathcal{Y})^2] &= \tr[(X X^\top - Y Y^\top)^2] \nonumber
		\\
		&= \tr[(X X^\top - Y Y^\top) (X X^\top - Y Y^\top)] \nonumber
		\\
		&= \tr[(X X^\top - Y Y^\top)^\top (X X^\top - Y Y^\top)] \nonumber
		\\
		&= \|X X^\top - Y Y^\top\|_F. \nonumber
		\\
		\implies \tr[(P_\mathcal{X} - P_\mathcal{Y})^2] &= \| P_\mathcal{X} - P_\mathcal{Y} \|_F. \label{eqn:trace-to-frobenius}
	\end{align}
	Using the triangle inequality for matrix norm, we can write
	\begin{align}
		\| P_\mathcal{X} - P_\mathcal{Y} \|_F &= \|X X^\top - Y Y^\top\|_F \nonumber
		\\
		&= \|X X^\top - Z Z^\top + Z Z^\top - Y Y^\top\|_F \nonumber
		\\
		&\le \|X X^\top - Z Z^\top\|_F + \|Z Z^\top - Y Y^\top\|_F, \nonumber
		\\
		\implies \| P_\mathcal{X} - P_\mathcal{Y} \|_F &\le \|P_\mathcal{X} - P_\mathcal{Z}\|_F + \|P_\mathcal{Z} - P_\mathcal{Y}\|_F, \label{eqn:part2-triangle-inequality}
	\end{align}
	where the columns of matrix $Z \in \mathbb{R}^{n \times r}$ correspond to the orthonormal basis spanning a linear subspace $\mathcal{Z}$.
	
	Following \cref{eqn:new-distance-square,eqn:trace-to-frobenius}, the squared distance between the subspaces $\mathcal{X}, \mathcal{Y}, \mathcal{Z}$ can be written as,
	\begin{align}
		2 \mathcal{D}_2^2(\mathcal{X},\mathcal{Y}) &= | p - q | + \| P_\mathcal{X} - P_\mathcal{Y} \|_F, \label{eqn:dist-cap-1}
		\\
		2 \mathcal{D}_2^2(\mathcal{X},\mathcal{Z}) &= | p - r | + \| P_\mathcal{X} - P_\mathcal{Z} \|_F, \label{eqn:dist-cap-2}
		\\
		2 \mathcal{D}_2^2(\mathcal{Z},\mathcal{Y}) &= | r - q | + \| P_\mathcal{Z} - P_\mathcal{Y} \|_F. \label{eqn:dist-cap-3}
	\end{align}
	where $\mathcal{D}_2(\mathcal{X},\mathcal{Z})$ and $\mathcal{D}_2(\mathcal{Z},\mathcal{Y})$ give the distance between subspaces $\mathcal{X},\mathcal{Z}$ and $\mathcal{Z},\mathcal{Y}$ respectively. By adding \cref{eqn:part1-triangle-inequality,eqn:part2-triangle-inequality}, we obtain
	\begin{equation*} 
		| p - q | + \|P_\mathcal{X} - P_\mathcal{Y}\|_F \le | p - r | + \|P_\mathcal{X} - P_\mathcal{Z}\|_F + | r - q | + \|P_\mathcal{Z} - P_\mathcal{Y}\|_F.
	\end{equation*}
	Using \cref{eqn:dist-cap-1,eqn:dist-cap-2,eqn:dist-cap-3}, this takes the following form,
	\begin{align*} 
		2 \mathcal{D}_2^2(\mathcal{X},\mathcal{Y}) &\le 2 \mathcal{D}_2^2(\mathcal{X},\mathcal{Z}) + 2 \mathcal{D}_2^2(\mathcal{Z},\mathcal{Y}).
		\\
		\implies \mathcal{D}_2(\mathcal{X},\mathcal{Y}) &\le \sqrt{\mathcal{D}_2^2(\mathcal{X},\mathcal{Z}) + \mathcal{D}_2^2(\mathcal{Z},\mathcal{Y})}.
		\\
		\implies \mathcal{D}_2(\mathcal{X},\mathcal{Y}) &\le \mathcal{D}_2(\mathcal{X},\mathcal{Z}) + \mathcal{D}_2(\mathcal{Z},\mathcal{Y}).
	\end{align*}
	This shows that the newly proposed distance measure \cref{eqn:distance-2} satisfies the triangle inequality, i.e., property $4$ in \Cref{def:metric}. 
\end{proof}

The maximal distance value $\mathcal{D}_2(\mathcal{X},\mathcal{Y})$ can render is $\sqrt{\max(p,q)}$. This corresponds to a scenario when the subspaces $\mathcal{X}$ and $\mathcal{Y}$ are orthogonal to each other, resulting in $\tilde{D} = 0$. We normalize the distance measure by its maximal value and introduce the following:
\begin{equation} \label{eqn:distance-2-normalized}
	\hat{\mathcal{D}}_2(\mathcal{X},\mathcal{Y}) = \frac{\mathcal{D}_2(\mathcal{X},\mathcal{Y})}{\sqrt{\max(p,q)}} = \sqrt{1 - \frac{\tilde{\mathcal{D}}^2(\mathcal{X},\mathcal{Y})}{\max(p,q)}},
\end{equation}
where $0 \leq \hat{\mathcal{D}}_2(\mathcal{X},\mathcal{Y}) \leq 1$. Moreover, it is important to highlight that this form of distance computation is free from the requirement to perform an SVD of the matrix with inner products between the two sets of orthonormal bases corresponding to the subspace pair, unlike in $\mathcal{D}_1(\mathcal{X},\mathcal{Y})$. This ascertains computational efficiency when repeated distance evaluations need to be made between numerous subspace pairs, as is the situation with the new active learning framework presented in this work.

\begin{remark}[Connection of $\hat{\mathcal{D}}_2(\mathcal{X},\mathcal{Y})$ with the chordal distance metric on the Grassmannian $Gr(p,n)$] 
	\label{remark:connection-with-geodesic-distance}
	When the linear subspaces $\mathcal{X}$ and $\mathcal{Y}$ have the same dimension, i.e., $p=q$, they lie on the Grassmannian $Gr(p,n)$. In this setting, recalling \cref{eqn:distance-fro-norm}, $\hat{\mathcal{D}}_2(\mathcal{X},\mathcal{Y})$ from \cref{eqn:distance-2-normalized} takes the following form:
	\begin{equation*}
		\hat{\mathcal{D}}_2(\mathcal{X},\mathcal{Y}) = \sqrt{1 - \frac{\| X^\top Y \|_F^2}{p}}. 
	\end{equation*}
	Using \cref{eqn:frobenius-norm-angle-relation}, we can write
	\begin{align}
		\hat{\mathcal{D}}_2(\mathcal{X},\mathcal{Y}) &= \sqrt{1 - \frac{\sum_{k=1}^{p} (\cos \theta_k)^2}{p}} = \frac{1}{\sqrt{p}} \sqrt{\sum_{k=1}^{p} \left( 1 - (\cos \theta_k)^2 \right)}, \nonumber
		\\
		\implies \hat{\mathcal{D}}_2(\mathcal{X},\mathcal{Y}) &= \frac{1}{\sqrt{p}} \sqrt{\sum_{k=1}^{p} (\sin \theta_k)^2} = \frac{1}{\sqrt{p}} \| \sin \boldsymbol{\Theta}(\mathcal{X},\mathcal{Y}) \|_2 = \frac{1}{\sqrt{p}} \mathcal{D}_{\mathcal{C}}(\mathcal{X},\mathcal{Y}), 
		\label{eqn:relation-with-chordal-metric}
	\end{align}
	where $\mathcal{D}_{\mathcal{C}}(\mathcal{X},\mathcal{Y}) := \| \sin \boldsymbol{\Theta}(\mathcal{X},\mathcal{Y}) \|_2$ is the chordal distance metric~\cite{ConHS1996,BarN02} on $Gr(p,n)$. We can see from \cref{eqn:relation-with-chordal-metric} that upon restricting the linear subspaces to have the same dimension, $\hat{\mathcal{D}}_2$ translates to the normalized chordal metric on $Gr(p,n)$.
\end{remark}


\section{Subspace-Distance-Enabled Active Learning Framework}
\label{sec:sde-actlearn}

In this section, we outline the core framework to actively sample new locations of high relevance in the parameter space for building a ROM by employing the distance measures described in \Cref{sec:subspace-similarity}. Let us start by denoting a parametric nonlinear dynamical system upon spatial discretization in the following manner: 
\begin{equation} \label{eqn:parametric-ode}
	\frac{d \mathbf{u}}{d t}=\mathbf{f}(\mathbf{u}, t ; \boldsymbol{\mu}), \quad \mathbf{u}(0, \boldsymbol{\mu})=\mathbf{u}_{0}(\boldsymbol{\mu}), \quad t \in[0, T],
\end{equation}
where $T \in \mathbb{R}^{+}$ denotes the final time; $\mathbf{u} \equiv \mathbf{u}(t, \boldsymbol{\mu})$ with $\mathbf{u}:[0, T] \times \mathcal{P} \rightarrow \mathbb{R}^{N}$ denotes the solution; $\mathbf{u}_{0}: \mathcal{P} \rightarrow \mathbb{R}^{N}$ denotes the parameterized initial condition; $\boldsymbol{\mu} \in \mathcal{P} \subseteq \mathbb{R}^{N_{\mu}}$ denotes the parameters; and $\mathbf{f}: \mathbb{R}^{N} \times[0, T] \times \mathcal{P} \rightarrow \mathbb{R}^{N}$ denotes a nonlinear function. In order to numerically compute the solution of \cref{eqn:parametric-ode}, the full-order model (FOM) solver can be queried at any given parameter sample. The space-time discrete solution at each parameter sample $\boldsymbol{\mu^*}$ can be represented by a snapshot matrix $U(\boldsymbol{\mu^*}) \in \mathbb{R}^{N \times (N_t + 1)}$, where each column represents a temporal instance of the solution corresponding to a fixed spatially discrete mesh. The solution snapshots are obtained along discrete time trajectories $\{ t_0, t_1, \dots , t_{N_t} \}$ with $t_0 = 0$ and $t_{N_t} = T$.

Consider that we start with $m_1 \in \mathbb{Z}^{+}$ snapshot matrices $U(\boldsymbol{\mu}_i)$ where $i \in \{1, \dots, m_1\}$, with each parameter-specific matrix having the following form,
\begin{equation}
	\label{eqn:sol-snap}
	U(\boldsymbol{\mu}_i) = [ \ \mathbf{u}(t_0,\boldsymbol{\mu}_i) \ | \ \mathbf{u}(t_1,\boldsymbol{\mu}_i) \ | \ \dots \ | \ \mathbf{u}(t_{N_t},\boldsymbol{\mu}_i) \ ].
\end{equation}
The collection of all the initial parameter samples can be represented by a set,
\begin{equation*}
	P = \{\boldsymbol{\mu}_1, \boldsymbol{\mu}_2, \dots, \boldsymbol{\mu}_{m_1}\}.
\end{equation*}
Also, consider another set $P^*$ which holds all the candidate parameter values that could be added in the set $P$ with the progression of the active learning procedure. The candidate set $P^*$ comprises of a fine sampling of the parameter domain $\mathcal{P}$, excluding the parameter values which already exist in the set $P$. It can be denoted in the following manner,
\begin{equation*}
	P^* = \{ \boldsymbol{\bar{\mu}}_1, \boldsymbol{\bar{\mu}}_2, \dots, \boldsymbol{\bar{\mu}}_{m_2} \} \ \text{with} \ \boldsymbol{\bar{\mu}}_j \neq \boldsymbol{\mu}_i; \ i=1,\dots,m_1; \ j=1,\dots,m_2.
\end{equation*}

We can now outline the key ideas behind the new active learning procedure. Let us start by finding appropriate parameter-specific linear subspaces, $\Phi(\boldsymbol{\mu}_i)$, that are representative of the snapshots collected in each $U(\boldsymbol{\mu}_i)$, where $\boldsymbol{\mu}_i \in P$. Then, we form pairs of neighboring samples in the initial parameter set P, and evaluate the distance between all these subspace pairs by employing one of the distance measures outlined in \Cref{sec:subspace-similarity}. The parameter pair whose subspaces turn out to be the farthest is selected. After which, one of the candidate parameter values, $\boldsymbol{\bar{\mu}}_j \in P^*$, that is closest to the midpoint of the selected parameter pair is picked as the new parameter sample. 

The full-order model solver is queried with this new parameter sample, which is followed by forming a linear subspace that is representative of the new snapshots. After this, the distances between the subspace corresponding to the newly picked sample with respect to its neighbor's subspaces are computed. Like before, the parameter pair whose subspaces correspond to the maximal distance is chosen, and yet another candidate sample from $P^*$ is picked such that it is closest to the bisecting point of the selected parameter pair. This procedure is repeated in an iterative fashion, either until a predefined computational budget is met, or until a user-specified distance tolerance and ROM error tolerance are satisfied. The overall active learning procedure is schematically summarized in \Cref{fig:overview}. We systematically expand on each of the aforementioned components in the remaining part of this section. 

\begin{figure}[!t]
	\centering
	\includegraphics[width=0.6\textwidth]{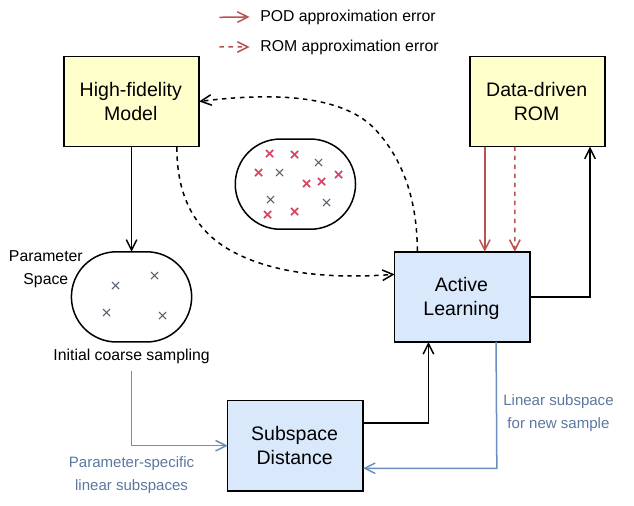}
	\caption{Overview of the subspace-distance-enabled active learning paradigm.} 
	\label{fig:overview}
\end{figure}

The columns of each parameter-specific snapshot matrix in \cref{eqn:sol-snap} span a linear subspace. We compute the bases vectors for each of these linear subspaces by performing POD of each snapshot matrix $U(\boldsymbol{\mu}_i)$, upon the following minimization:
\begin{equation} \label{eqn:pod-definition}
	\min \frac{1}{N_t + 1} \sum_{j=0}^{N_t} \left\| \mathbf{u}(t_j,\boldsymbol{\mu}_i) - \sum_{k=1}^{z} \left( \mathbf{u}(t_j,\boldsymbol{\mu}_i), \boldsymbol{\phi}_k(\boldsymbol{\mu}_i) \right)_2 \boldsymbol{\phi}_k(\boldsymbol{\mu}_i) \right\|^2_2 \ \text{with} \ z=0,\dots,N_t,
\end{equation} 
where $\left(\cdot,\cdot\right)_2$ represents the inner-product in $\mathbb{R}^N$, $\left\| \cdot \right\|_2$ represents the $2$-norm, and each $\boldsymbol{\phi}_i(\boldsymbol{\mu}_i)$ is a $\mu_i$-specific basis vector, also referred to as a POD basis vector or a POD mode. The minimization problem in \cref{eqn:pod-definition} can be solved by employing the SVD or by using the method of snapshots~\cite{Sir1987} to form the $\mu_i$-specific POD subspace. 

Consider the SVD of the snapshot matrices:
\begin{equation} \label{eqn:svd}
	U(\boldsymbol{\mu}_i) = L(\boldsymbol{\mu}_i) \Sigma(\boldsymbol{\mu}_i) K^\top(\boldsymbol{\mu}_i),
\end{equation}
where $L \in \mathbb{R}^{N \times N}$, $K \in \mathbb{R}^{(N_t+1)\times(N_t+1)}$ are orthogonal matrices with their columns as the left and right singular vectors, respectively. The matrix $\Sigma \in \mathbb{R}^{N \times (N_t+1)}$ comprises of non-negative elements on its main diagonal in a decreasing order, referred to as the singular values. We denote $\sigma_k^{(i)}$, $k = 1, \dots, s_i$ as the nonzero singular values in non-ascending order corresponding to the SVD of each snapshot matrix at $\boldsymbol{\mu}_i \in P$. The $\mu_i$-specific POD subspace is formed by selecting the first $r_i \le s_i$ left singular vectors,
\begin{equation} \label{eqn:para-linear-subspaces}
	\Phi(\boldsymbol{\mu}_i) = [ \ \mathbf{l}_1(\boldsymbol{\mu}_i) \ | \ \mathbf{l}_2(\boldsymbol{\mu}_i) \ | \ \dots \ | \ \mathbf{l}_{r_i}(\boldsymbol{\mu}_i) \ ].
\end{equation}
Here, the level of truncation $r_i$ for each $\boldsymbol{\mu}_i \in P$ is taken as the lowest value such that at least a user-defined energy criterion $\eta(\boldsymbol{\mu}_i)$, taken constant for all $\boldsymbol{\mu}_i$, is met. It is defined as,
\begin{equation} \label{eqn:energy-criterion}
	\eta(\boldsymbol{\mu}_i) = 1 - \frac{\sum_{k=1}^{k=r_i} (\sigma^{(i)}_k)^2}{\sum_{k=1}^{k=s_i} (\sigma^{(i)}_k)^2}.
\end{equation}
This means that we only use the \textit{dominant} left-singular vectors to form $\Phi(\boldsymbol{\mu}_i)$, which are known as the \textit{dominant} POD modes or POD basis vectors.

To form pairs of neighboring parameter samples in the set $P$, we can start by computing the Euclidean distance between the samples,
\begin{equation*}
	\| \boldsymbol{\mu}_m - \boldsymbol{\mu}_n \|_2 \text{, where } \boldsymbol{\mu}_m \in P, \ \boldsymbol{\mu}_n \in P, \ \boldsymbol{\mu}_m \ne \boldsymbol{\mu}_n. 
\end{equation*}
The parameter values with the least Euclidean distance for each sample $\boldsymbol{\mu}_i \in P$ can be identified, and pairs of nearest neighbors be formed. For instance, when $N_\mu = 1$, and the values in the set $P$ are ordered in an increasing fashion, each scalar parameter $\mu_i \in P$ will only have a maximum of two neighboring parameters $\mu_{i-1}$ and $\mu_{i+1}$. The minimum and maximum $\mu_i$ will only have $\mu_{i+1}$ and $\mu_{i-1}$ as the immediate neighbors, respectively. Furthermore, since the size of set $P$ is $m_1$, the total number of pairs becomes $(m_1-1)$. For $N_\mu > 1$, instead of taking the brute-force approach of checking the Euclidean distance between every possible combination of $\boldsymbol{\mu}_i \in P$, we can resort to a quicker nearest-neighbor lookup using kd-tree~\cite{FriBF1977,AryM1993,AryMNetal98,ManM02}.

\begin{figure}[!t]
	\centering
	\includegraphics[width=1\textwidth]{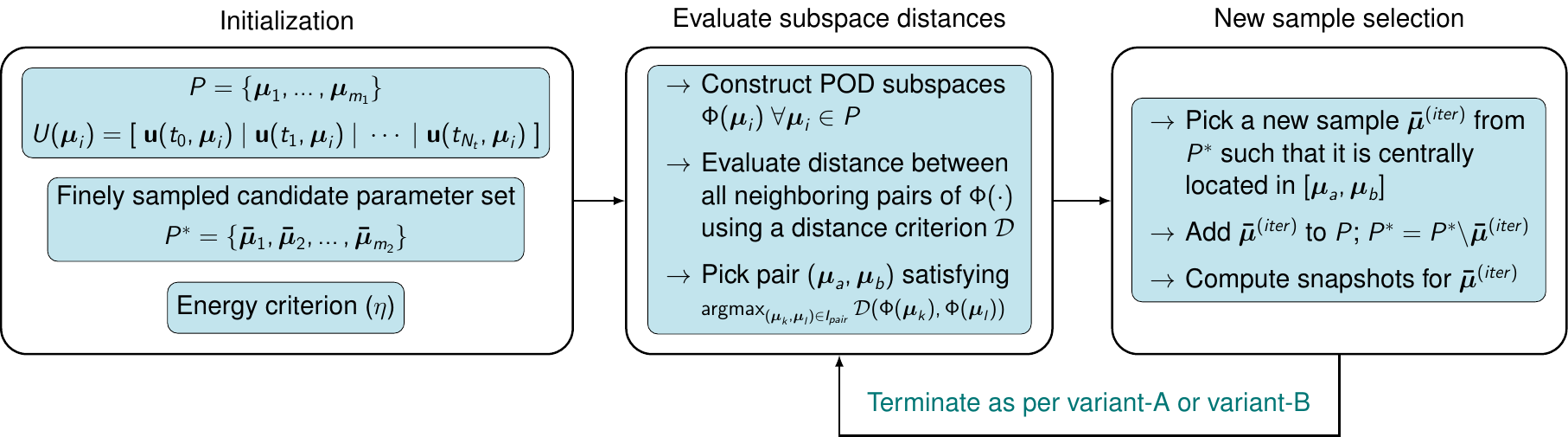}
	\caption{The SDE-AL procedure relying on iterative evaluations of the distance between parametric linear subspaces corresponding to the high-fidelity solution snapshots. The selection of new parameter samples can be concluded based on the variants proposed in \Cref{alg:sde-actlearn-A,alg:sde-actlearn-B}.} 
	\label{fig:flowchart-active-learning}
\end{figure}

Let us denote the total number of neighboring parameter pairs as $n_p \in \mathbb{Z}^+$. The set of index-pairs corresponding to all the neighboring parameter pairs can be represented in the following fashion,
\begin{equation} \label{eqn:set-index-pairs}
	I_{pair} = \{ (\boldsymbol{\mu}_k, \boldsymbol{\mu}_l) \}, \forall \boldsymbol{\mu}_k, \boldsymbol{\mu}_l \in P, \text{ such that } k \ne l \text{ and } \boldsymbol{\mu}_l \text{ is a neighbor of } \boldsymbol{\mu}_k,
\end{equation}
where the size of set $I_{pair}$ is $n_p$. For all the parameter pairs in $I_{pairs}$, we evaluate the distance between their subspaces by either following \cref{eqn:distance-1} or \cref{eqn:distance-2-normalized}, i.e.,
\begin{align}
	\mathcal{D}_1(\Phi(\boldsymbol{\mu}_k),\Phi(\boldsymbol{\mu}_l)) &= \| \arccos \boldsymbol{\sigma}(\Phi(\boldsymbol{\mu}_k)^\top \Phi(\boldsymbol{\mu}_l)) \|_2, \label{eqn:distance-1-actlearn}
	\\
	\hat{\mathcal{D}}_2(\Phi(\boldsymbol{\mu}_k),\Phi(\boldsymbol{\mu}_l)) &= \sqrt{1 - \frac{\| \Phi(\boldsymbol{\mu}_k)^\top \Phi(\boldsymbol{\mu}_l) \|_F^2}{\max(r_k,r_l)}}, \label{eqn:distance-2-actlearn}
\end{align}
where $r_k$ and $r_l$ denote the dimension of the linear subspaces represented by $\Phi(\boldsymbol{\mu}_k)$ and $\Phi(\boldsymbol{\mu}_l)$, respectively.

\begin{algorithm}[!b] 
	\caption{\texttt{Subspace-distance-enabled active learning: Variant-A} \\ {\color{darkgray} {Terminates after a user-defined computational budget for FOM solver queries is met.}}} 
	\label{alg:sde-actlearn-A}  
	\begin{algorithmic}[1]
		\REQUIRE{Initial parameter set $P$, $U(\boldsymbol{\mu}_i) \ \forall \boldsymbol{\mu}_i \in P$, candidate parameter set $P^*$, energy criterion $\eta$, {\color{darkgray} maximum permissible FOM queries $max\_query$}.}
		\ENSURE{Updated parameter set $P$, $U(\boldsymbol{\mu}_i) \ \forall \boldsymbol{\mu}_i \in P$.}
		\STATE Initialization: $iter = 1$.
		\STATE Construct $\mu_i$-specific matrix of basis vectors $\Phi(\boldsymbol{\mu}_i) \ \forall \boldsymbol{\mu}_i \in P$ by following \cref{eqn:para-linear-subspaces,eqn:energy-criterion}.
		\STATE Identify pairs of neighboring parameters in $P$ and create the set of index-pairs $I_{pair}$ by \cref{eqn:set-index-pairs}.\hspace{-2pt} 
		\STATE Evaluate distance between all $(\boldsymbol{\mu}_k, \boldsymbol{\mu}_l) \in I_{pair}$ via \cref{eqn:distance-1-actlearn} or \cref{eqn:distance-2-actlearn}. 
		\STATE Obtain the parameter pair $(\boldsymbol{\mu}_a, \boldsymbol{\mu}_b)$ whose subspaces are the farthest by following \cref{eqn:para-pair-max-distance}.
		\WHILE[Iterate until a computational budget has reached.]{$iter \le max\_query$}
		\STATE Pick a candidate sample from $P^*$ as the new sample $\boldsymbol{\bar{\mu}}^{(iter)}$ by using \cref{eqn:new-para-sample}.
		\STATE Extend $P$ by including $\boldsymbol{\bar{\mu}}^{(iter)}$. Update the candidate set, $P^* = P^* \ \backslash \ \boldsymbol{\bar{\mu}}^{(iter)}$.
		\STATE Get the solution snapshots $U(\boldsymbol{\bar{\mu}}^{(iter)})$ from a high-fidelity FOM solver or experiments.
		\STATE Construct $\Phi(\boldsymbol{\bar{\mu}}^{(iter)})$ by following \cref{eqn:para-linear-subspaces}, and decide the POD truncation level for $\boldsymbol{\bar{\mu}}^{(iter)}$ by ensuring $\eta$ is satisfied via \cref{eqn:energy-criterion}. 
		\STATE Update the pairs of neighboring parameters in $P$ and the set of index-pairs $I_{pair}$ via \cref{eqn:set-index-pairs}. 
		\STATE Evaluate the distance between any new pairs $(\boldsymbol{\mu}_k, \boldsymbol{\mu}_l) \in I_{pair}$ compared to the previous iteration by using \cref{eqn:distance-1-actlearn} or \cref{eqn:distance-2-actlearn}. 
		\STATE Obtain the parameter pair $(\boldsymbol{\mu}_a, \boldsymbol{\mu}_b)$ whose subspaces are the farthest by following \cref{eqn:para-pair-max-distance}.
		\STATE $iter = iter + 1$.
		\ENDWHILE 
	\end{algorithmic}
\end{algorithm}

The parameter pair $(\boldsymbol{\mu}_a, \boldsymbol{\mu}_b)$ from $I_{pair}$, whose subspaces turn out to be the farthest as per the distance measure \cref{eqn:distance-1-actlearn} or \cref{eqn:distance-2-actlearn}, is selected,
\begin{equation} \label{eqn:para-pair-max-distance}
	(\boldsymbol{\mu}_a, \boldsymbol{\mu}_b) = \argmax_{(\boldsymbol{\mu}_k, \boldsymbol{\mu}_l) \in I_{pair}} \mathcal{D}(\Phi(\boldsymbol{\mu}_k),\Phi(\boldsymbol{\mu}_l)).
\end{equation}
Here, $\mathcal{D}(\cdot,\cdot)$ generically refers to \cref{eqn:distance-1-actlearn} or \cref{eqn:distance-2-actlearn}. With the procedure until now, we have arrived at a pair of parameter samples from $P$ whose solution manifolds are maximally far apart. Intuitively, we would want to include a new parameter value from the region $(\boldsymbol{\mu}_a, \boldsymbol{\mu}_b)$ into the set $P$, to reduce the variability between the solutions corresponding to parameters in $P$. We propose to pick the candidate parameter value from $P^*$ that is closest to the bisecting point of the line-segment connecting $\boldsymbol{\mu}_a$ to $\boldsymbol{\mu}_b$ in the parameter domain. This can be represented in the following manner:
\begin{equation} \label{eqn:new-para-sample}
	\boldsymbol{\bar{\mu}}^{(iter)} = \argmin_{\boldsymbol{\bar{\mu}}' \in P^*} \left\lVert \frac{\boldsymbol{\mu}_a + \boldsymbol{\mu}_b}{2} - \boldsymbol{\bar{\mu}}' \right\rVert_2,
\end{equation}
where $iter$ refers to the iteration number and $\boldsymbol{\bar{\mu}}^{(iter)}$ denotes the newly selected parameter. At the start of the active learning procedure, $iter = 1$, and it is incremented by $1$ at the end of every active learning iteration.

\begin{algorithm}[!h]
	\caption{\texttt{Subspace-distance-enabled active learning: Variant-B} \\ {\color{darkgray} {Terminates after the user-defined tolerances for the subspace distance and ROM error are satisfied.}}}
	\label{alg:sde-actlearn-B}  
	\begin{algorithmic}[1]
		\REQUIRE{Initial parameter set $P$, $U(\boldsymbol{\mu}_i) \ \forall \boldsymbol{\mu}_i \in P$, candidate parameter set $P^*$, energy criterion $\eta$, {\color{darkgray} subspace distance tolerance $tol_d$, ROM error tolerance $tol_e$}.}
		\ENSURE{Updated parameter set $P$, $U(\boldsymbol{\mu}_i) \ \forall \boldsymbol{\mu}_i \in P$.}
		\STATE Initialization: $iter = 1$, $\mathcal{D}_{max}^{(iter)} = 100$, $\mathcal{E}^{(iter)} = 100$. \COMMENT{Take $\mathcal{D}_{max}^{(iter)} > tol_d$, $\mathcal{E}^{(iter)} > tol_e$.}
		\STATE Construct $\mu_i$-specific matrix of basis vectors $\Phi(\boldsymbol{\mu}_i) \ \forall \boldsymbol{\mu}_i \in P$ by following \cref{eqn:para-linear-subspaces,eqn:energy-criterion}.
		\STATE Identify pairs of neighboring parameters in $P$ and create the set of index-pairs $I_{pair}$ by \cref{eqn:set-index-pairs}.\hspace{-2pt}
		\STATE Evaluate distance between all $(\boldsymbol{\mu}_k, \boldsymbol{\mu}_l) \in I_{pair}$ via \cref{eqn:distance-1-actlearn} or \cref{eqn:distance-2-actlearn}. 
		\STATE Obtain the parameter pair $(\boldsymbol{\mu}_a, \boldsymbol{\mu}_b)$ whose subspaces are the farthest by following \cref{eqn:para-pair-max-distance}, and store $\mathcal{D}_{max}^{(iter)} = \max_{(\boldsymbol{\mu}_k, \boldsymbol{\mu}_l) \in I_{pair}} \mathcal{D}(\Phi(\boldsymbol{\mu}_k),\Phi(\boldsymbol{\mu}_l))$.
		\WHILE[Iterate until a predefined ROM error is attained.]{$\mathcal{E}^{(iter)} > tol_e$}
			\WHILE[Iterate until maximum distance reaches a level.]{$\mathcal{D}_{max}^{(iter)} > tol_d$}
				\STATE Pick a candidate sample from $P^*$ as the new sample $\boldsymbol{\bar{\mu}}^{(iter)}$ by using \cref{eqn:new-para-sample}.
				\STATE Extend $P$ by including $\boldsymbol{\bar{\mu}}^{(iter)}$. Update the candidate set, $P^* = P^* \ \backslash \ \boldsymbol{\bar{\mu}}^{(iter)}$.
				\STATE Get the solution snapshots $U(\boldsymbol{\bar{\mu}}^{(iter)})$ from a high-fidelity model or experiments.
				\STATE Construct $\Phi(\boldsymbol{\bar{\mu}}^{(iter)})$ by following \cref{eqn:para-linear-subspaces}, and decide the POD truncation level for $\boldsymbol{\bar{\mu}}^{(iter)}$ by ensuring $\eta$ is satisfied via \cref{eqn:energy-criterion}. 
				\STATE Update the pairs of neighboring parameters in $P$ and the set of index-pairs $I_{pair}$ via \cref{eqn:set-index-pairs}.\hspace{-4pt}
				\STATE Evaluate the distance between any new pairs $(\boldsymbol{\mu}_k, \boldsymbol{\mu}_l) \in I_{pair}$ compared to the previous iteration by using \cref{eqn:distance-1-actlearn} or \cref{eqn:distance-2-actlearn}. 
				\STATE Obtain the parameter pair $(\boldsymbol{\mu}_a, \boldsymbol{\mu}_b)$ whose subspaces are the farthest by following \cref{eqn:para-pair-max-distance}, and store $\mathcal{D}_{max}^{(iter)} = \max_{(\boldsymbol{\mu}_k, \boldsymbol{\mu}_l) \in I_{pair}} \mathcal{D}(\Phi(\boldsymbol{\mu}_k),\Phi(\boldsymbol{\mu}_l))$.
				\STATE $iter = iter + 1$.
			\ENDWHILE
			\STATE $\mathcal{E}^{(iter)} = \texttt{error\_estimator}(P, \ P^*, \ U(\boldsymbol{\mu}_i) \ \forall \boldsymbol{\mu}_i \in P, \ \Phi(\boldsymbol{\mu}_i) \ \forall \boldsymbol{\mu}_i \in P)$.
			\STATE $\mathcal{D}_{max}^{(iter)} = 100$. \COMMENT{$\mathcal{D}_{max}^{(iter)} > tol_d$ to allow the inner while loop to be executed.}
		\ENDWHILE \vspace{-4pt}
		\\\hrulefill
		\STATE \textbf{Function} $\texttt{error\_estimator}(P, \ P^*, \ U(\boldsymbol{\mu}_i) \ \forall \boldsymbol{\mu}_i \in P, \ \Phi(\boldsymbol{\mu}_i) \ \forall \boldsymbol{\mu}_i \in P)$ \textbf{:}
		\INDSTATE \textbf{for all} $\boldsymbol{\mu}_i \in P$ \textbf{do}
		\INDSTATE[2]$U_{POD}(\boldsymbol{\mu}_i) = \Phi(\boldsymbol{\mu}_i) \Phi^\top(\boldsymbol{\mu}_i) U(\boldsymbol{\mu}_i)$. \COMMENT{Compute POD-approximate solution.}
		\INDSTATE[2]$E(\boldsymbol{\mu}_i) = U(\boldsymbol{\mu}_i) - U_{POD}(\boldsymbol{\mu}_i)$. \COMMENT{Obtain parameter-specific error snapshots.}
		\INDSTATE[2]$\boldsymbol{\varepsilon}(\boldsymbol{\mu}_i) = \left[ \ \frac{||E_0(\boldsymbol{\mu}_i)||_2}{||U_0(\boldsymbol{\mu}_i)||_2} \ \Big| \ \frac{||E_1(\boldsymbol{\mu}_i)||_2}{||U_1(\boldsymbol{\mu}_i)||_2} \ \Big| \ \dots \ \Big| \ \frac{||E_{N_t}(\boldsymbol{\mu}_i)||_2}{||U_{N_t}(\boldsymbol{\mu}_i)||_2} \ \right]$. \COMMENT{Evaluate relative $l^2$ norm $\text{\qquad}$ of the error at each time-step $t_j \in \{ t_0, t_1, \dots , t_{N_t} \}$.}
		\INDSTATE \textbf{end for}
		\INDSTATE $\boldsymbol{\tilde{\varepsilon}}(\boldsymbol{\mu}^*) = [ \ \tilde{\varepsilon}_0(\boldsymbol{\mu}^*) \ | \ \tilde{\varepsilon}_1(\boldsymbol{\mu}^*) \ | \ \dots \ | \ \tilde{\varepsilon}_{N_t}(\boldsymbol{\mu}^*) \ ]$. \COMMENT{Create interpolant $\boldsymbol{\tilde{\varepsilon}}(\cdot)$ in parameter $\text{\quad}$ domain by using $\boldsymbol{\varepsilon}(P)$ as training data. It can be queried at any new $\boldsymbol{\mu}^*$.}
		\INDSTATE $\hat{\varepsilon}(\boldsymbol{\mu}^*) := ||\boldsymbol{\tilde{\varepsilon}}(\boldsymbol{\mu}^*)||_\infty = \max (|\tilde{\varepsilon}_0(\boldsymbol{\mu}^*)|, |\tilde{\varepsilon}_1(\boldsymbol{\mu}^*)|, \dots, |\tilde{\varepsilon}_{N_t}(\boldsymbol{\mu}^*)|)$. \COMMENT{Take $l^{\infty}$ norm in time.}
		\INDSTATE \textbf{for all} $\boldsymbol{\bar{\mu}}_j \in P^*$ \textbf{do} 
		\INDSTATE[2]Evaluate $\hat{\varepsilon}(\boldsymbol{\bar{\mu}}_j)$.
		\INDSTATE \textbf{end for} 
		\INDSTATE $\mathcal{E} = \max_{\boldsymbol{\bar{\mu}}' \in P^*} \hat{\varepsilon}(\boldsymbol{\bar{\mu}}')$.
		\INDSTATE \textbf{return} $\mathcal{E}$
	\end{algorithmic}
\end{algorithm}

The parameter set $P$ is extended by including the selected candidate parameter sample $\boldsymbol{\bar{\mu}}^{(iter)}$. At the same time, the candidate set is also updated, $P^* = P^* \backslash \boldsymbol{\bar{\mu}}^{(iter)}$. We query the full-order model solver at the new sample and obtain the snapshots $U(\boldsymbol{\bar{\mu}}^{(iter)})$. Next, POD of the new snapshot matrix is carried out, and an appropriate matrix $\Phi(\boldsymbol{\bar{\mu}}^{(iter)})$ is constructed by following \cref{eqn:para-linear-subspaces}, such that the same energy criterion $\eta$, refer \cref{eqn:energy-criterion}, is maintained for the new sample as well. The neighboring parameter pairs that correspond to the samples in the modified set $P$ are identified, which also results in a change of the total number of pairs $n_p$. At the same time, the set of index-pairs is also updated by using \cref{eqn:set-index-pairs}. For $iter > 1$, only the distances between any new pairs $(\boldsymbol{\mu}_k, \boldsymbol{\mu}_l) \in I_{pair}$ are evaluated by using either \cref{eqn:distance-1-actlearn} or \cref{eqn:distance-2-actlearn}. The procedures in \cref{eqn:para-pair-max-distance,eqn:new-para-sample}, and that of extending P, are then repeated till a stopping criterion is met.

In \Cref{alg:sde-actlearn-A,alg:sde-actlearn-B}, we present two variants to systematically terminate the iterative active learning procedure. One approach is to conclude the selection of new parameters as soon as a user-defined computational budget for the FOM solver queries is met. This quantity is denoted by $max\_query$ in \Cref{alg:sde-actlearn-A}. Another approach is to conclude the selection of new parameters as soon as the user-defined tolerances for the subspace distance and ROM error are satisfied. These tolerances are denoted by $tol_d$ and $tol_e$ in \Cref{alg:sde-actlearn-B}. The maximal distance between the subspaces is stored as $\mathcal{D}_{max}^{(iter)}$ at steps $5$ and $14$ of \Cref{alg:sde-actlearn-B}. In this variant, the iterations continue as long as $\mathcal{D}_{max}^{(iter)} > tol_d$, after which the error induced in the ROM, $\mathcal{E}^{(iter)}$, is evaluated, and the new parameter selection is finally concluded as soon as $\mathcal{E}^{(iter)} \le tol_e$. 

To estimate the error induced in the ROM, we employ the error estimator developed in \cite{morKapFB24}, which is concisely presented in steps 20--32 of \Cref{alg:sde-actlearn-B}. The error estimator is formed by constructing an interpolant between the parameter-specific POD approximation errors corresponding to all the parameters present in set $P$. This interpolant is queried at candidate parameter samples from $P^*$ and the maximal temporal error among them is regarded as the ROM error $\mathcal{E}^{(iter)}$. It is important to highlight that the active learning procedure is independent of the form of error estimator employed. Moreover, in full generality, $U_{POD}(\cdot)$ in step 23 of \Cref{alg:sde-actlearn-B} can be replaced by the solution approximation from the ROM. But this requires the ROM construction at every iteration of the outer while loop in \Cref{alg:sde-actlearn-B}. Precisely to avoid this cost, we choose to rather employ the error estimator as presented in \Cref{alg:sde-actlearn-B}, which has proven to provide a reasonable estimate for the complete ROM error in \cite{morKapFB24}.

\newpage
In \Cref{sec:num-exp}, both the subspace-distance-enabled active learning variants, as presented in \Cref{alg:sde-actlearn-A,alg:sde-actlearn-B}, are used for the numerical tests. They are numerically studied in conjugation with the model reduction methods presented in \Cref{sec:existing-roms}. In \Cref{fig:flowchart-active-learning}, a general overview of the entire active learning procedure is provided, encompassing both the proposed variants.

\begin{remark}[Regarding the candidate parameter set $P^*$ and selection of new parameter samples $\boldsymbol{\bar{\mu}}^{(iter)}$]
	\label{remark:candidate-set}
	In principle, instead of picking new parameter samples by following \cref{eqn:new-para-sample}, we can simply consider the midpoint of the line-segment connecting $\boldsymbol{\mu}_a$ to $\boldsymbol{\mu}_b$ as the new parameter sample, i.e., $\boldsymbol{\bar{\mu}}^{(iter)} = \frac{\boldsymbol{\mu}_a + \boldsymbol{\mu}_b}{2}$. However, we intentionally avoid this strategy and pick new samples from the candidate parameter set $P^*$ to avoid repeated selection of samples in a small sub-region of the parameter domain. Since $P^*$ contains some finite number of candidate parameter locations, it implicitly provides a restriction for new sample selection beyond a certain closeness in the parameter domain. As a result, whenever there are no candidate parameter locations left in the region surrounded by $\boldsymbol{\mu}_a$ and $\boldsymbol{\mu}_b$, automatically, the new sample can then be selected by using the parameter pair whose subspaces are second farthest. Such a scenario can progressively occur for any parameter pair under consideration, ultimately leading to the exhaustion of all the candidate parameter locations.
	
	Another benefit of working with the candidate parameter set $P^*$ is that it can be crafted by the user, enabling any arbitrary distribution of candidate locations across the parameter domain. This allows the possibility for using a quasi-Monte Carlo sampling strategy like the Latin hypercube sampling~\cite{McKBC1979,Tan1993,Loh1996}, to prepare a sparse distribution of candidate samples, which can be particularly useful to control the curse of dimensionality when considering a high-dimensional parameter domain. Hence,  on one hand, the SDE-AL strategy provides a mechanism that favors parameter locations minimizing the dissimilarity between parametric solution fields, and on the other hand, the user-defined distribution of candidate parameter locations in $P^*$ helps maintain a good coverage over the parameter domain. 
\end{remark}


\section{Subspace-Distance-Enabled Active-Learning-Driven ROMs}
\label{sec:existing-roms}

In this section, we present two existing non-intrusive model order reduction methods, namely POD-KSNN and POD-NN, and propose strategies to augment them using the ideas presented in \Cref{sec:sde-actlearn}. The active learning procedure provides a principled way to generate the parameter set and its corresponding high-fidelity snapshots, either as a pre-processing step prior to constructing the ROMs, or as an intermediate step during the construction of the ROMs. These two versions are respectively based on the two variants to perform subspace-distance-enabled active learning, as presented in \Cref{alg:sde-actlearn-A,alg:sde-actlearn-B}. 


\subsection{POD-KSNN ROM}
\label{subsec:pod-ksnn-rom}

The POD-KSNN method is a data-driven approach as outlined in~\cite{morKapFB24}, to create a ROM for any parametric dynamical system \cref{eqn:parametric-ode} using kernel-based shallow neural networks (KSNNs). Two interpolation steps are sequentially carried out in the parameter and time domain, with an intermediate POD reduction of the solution snapshots. On top of the fast online queries, the shallow network architecture used to create the interpolants renders a fast offline training stage.

In order to present the ROM concretely, let us first formalize the interpolation procedure, which is carried out via KSNNs. A KSNN consists of a single input, hidden, and output layer each, with radial basis kernels as its hidden layer activation functions. The width of the input layer scales with the number of data points $\{\bx_j \in \mathbb{R}^p\}_{j=1}^{\ell}$, whereas the output layer width depends on the dimension of the function we are trying to approximate. For instance, the output layer needs to have a width of $q$ for approximating the function $\mathbf{\widehat{y}}(\bbx) \in \mathbb{R}^{q}$. 

\begin{table}[!b]
	\begin{center}
		\begin{tabular}{lc}
			Name & Expression\\
			\hline\noalign{\medskip}
			Gaussian &  $e^{-(d/\epsilon)^2}$ \\
			Multi-quadric & $\sqrt{(d/\epsilon)^2 + 1}$ \\
			Cubic spline & $d^3$ \\
			\noalign{\medskip}\hline\noalign{\smallskip}
		\end{tabular}
		\caption{List of some radial basis kernels.}
		\label{tab:rbf-kernels}
	\end{center}
\end{table}

The radial basis kernel activation characterized by its width $\epsilon_i$ and center location $\mathbf{c}_i$ can be written as $\phi_i:=\phi(\| \bx - \mathbf{c}_i \|; \epsilon_i)$. For a given center location and kernel width, the activation value depends on the radial distance of the input $\bx$ from centers $\{\mathbf{c}_i\}_{i=1}^{n_c}$, i.e., $d_i = \| \bx - \mathbf{c}_i \|$, with $\|\cdot\|$ being the Euclidean norm. \Cref{tab:rbf-kernels} shows a few radial basis kernels with kernel width $\epsilon$ and radial distance $d$. The KSNN output can be represented as
\begin{align} \label{eqn:KSNN-vector-map}
	\mathbf{y}(\bx) &=  \left[ \begin{matrix} y_1(\bx) \\ y_2(\bx) \\ \vdots \\ y_q(\bx) \end{matrix} \right] = \left[ \begin{matrix} \sum_{i=1}^{n_c} w_{i}^{(1)} \phi(\| \bx - \mathbf{c}_{i} \|; \epsilon_i) \\ \sum_{i=1}^{n_c} w_{i}^{(2)} \phi(\| \bx - \mathbf{c}_{i} \|; \epsilon_i) \\ \vdots \\ \sum_{i=1}^{n_c} w_{i}^{(q)} \phi(\| \bx - \mathbf{c}_{i} \|; \epsilon_i) \end{matrix} \right], 
\end{align}
where $y_k(\bx_j) = \widehat{y}_k(\bx_j)$ is enforced for all $j=1,\ldots,\ell$ and $k=1,\ldots,q$. Thereby, the network can be trained via the alternating dual-staged iterative training (ADSIT) procedure from~\cite{morKapFB24}, either in interpolation mode ($n_c = \ell$) or in regression mode ($n_c < \ell$). Upon training, we obtain the output layer weights $\{w_{i}^{(k)}\}_{i=1}^{\ell}$ corresponding to $k=1,\ldots,q$, the kernel widths $\{\epsilon_i\}_{i=1}^{n_c}$, and/or the kernel center locations $\{\mathbf{c}_i\}_{i=1}^{n_c}$.

In the POD-KSNN method, a KSNN interpolant is first constructed using the space-time discrete solution snapshots in the parameter domain, which is referred to as $\mu$-KSNN. The trained $\mu$-KSNN can later be queried for any new parameter $\boldsymbol{\mu}^*$. For a given set of parameters $\hat{P} = \{\boldsymbol{\mu}_i\}_{i=1}^{m}$, assume that the solution snapshots are available as shown in \cref{eqn:sol-snap}. Consider each snapshot matrix $U(\boldsymbol{\mu}_i)$ reshaped as a vector in the following manner:
\begin{equation*}
	\widehat{\mathbf{y}}_u(\boldsymbol{\mu}_i) = \left[ \begin{matrix} \mathbf{u}(t_0,\boldsymbol{\mu}_i) \\ \mathbf{u}(t_1,\boldsymbol{\mu}_i) \\ \vdots \\ \mathbf{u}(t_{N_t},\boldsymbol{\mu}_i) \end{matrix} \right].
\end{equation*}
By following \cref{eqn:KSNN-vector-map}, $\mu$-KSNN is constructed by interpolating between the snapshots $\{\widehat{\mathbf{y}}_u(\boldsymbol{\mu}_i)\}_{i=1}^{m}$ in the parameter domain, which can be represented as
\begin{equation} \label{eqn:mu-KSNN-interpolant}
	\mathcal{I}^\mu(\boldsymbol{\mu}) = \left[ \begin{matrix} \mathbf{u}^I(t_0,\boldsymbol{\mu}) \\ \mathbf{u}^I(t_1,\boldsymbol{\mu}) \\ \vdots \\ \mathbf{u}^I(t_{N_t},\boldsymbol{\mu}) \end{matrix} \right] = \left[ \begin{matrix} \sum_{i=1}^{m} w_{i}^{(1)} \phi(\| \boldsymbol{\mu} - \boldsymbol{\mu}_i \|; \epsilon_i) \\ \sum_{i=1}^{m} w_{i}^{(2)} \phi(\| \boldsymbol{\mu} - \boldsymbol{\mu}_i \|; \epsilon_i) \\ \vdots \\ \sum_{i=1}^{m} w_{i}^{(N)} \phi(\| \boldsymbol{\mu} - \boldsymbol{\mu}_i \|; \epsilon_i) \ \end{matrix} \right].
\end{equation}
This allows us to write the interpolated snapshot matrix $U^I$ in the following fashion:
\begin{equation} \label{eqn:query-KSNN-surrogates}
	U^I(\boldsymbol{\mu}) = [ \ \mathbf{u}^I(t_0,\boldsymbol{\mu}) \ | \ \mathbf{u}^I(t_1,\boldsymbol{\mu}) \ | \ \dots \ | \ \mathbf{u}^I(t_{N_t},\boldsymbol{\mu}) \ ].
\end{equation}

In the online phase of POD-KSNN, $\mu$-KSNN is queried at a new parameter $\boldsymbol{\mu^*}$ and the SVD of the interpolated snapshots $U^I(\boldsymbol{\mu}^*)$ is performed, similar to \cref{eqn:svd},
\begin{equation*}
	U^I(\boldsymbol{\mu}^*) = L(\boldsymbol{\mu}^*) \Sigma(\boldsymbol{\mu}^*) K^\top(\boldsymbol{\mu}^*).
\end{equation*}
We obtain the matrix $\tilde{\Phi}(\boldsymbol{\mu}^*)$ of POD modes by collecting the truncated left singular vectors for $\boldsymbol{\mu}^*$,
\begin{equation} \label{eqn:pod-ksnn-bases}
	\tilde{\Phi}(\boldsymbol{\mu}^*) = [ \ \mathbf{l}_1(\boldsymbol{\mu}^*) \ | \ \mathbf{l}_2(\boldsymbol{\mu}^*) \ | \ \dots \ | \ \mathbf{l}_{r_*}(\boldsymbol{\mu}^*) \ ]
\end{equation}
where the truncation level $r_*$ is the lowest value such that the user-defined energy criterion $\hat{\eta}$ is satisfied, as defined below:
\begin{equation*}
	\hat{\eta} = 1 - \frac{\sum_{k=1}^{k=r_*} (\sigma_k)^2}{\sum_{k=1}^{k=s_*} (\sigma_k)^2}.
\end{equation*}

The interpolated snapshots $U^I(\boldsymbol{\mu}^*)$ can be projected on the subspace spanned by the columns of $\tilde{\Phi}(\boldsymbol{\mu}^*)$ to obtain a low-dimensional representation of the solution,
\begin{equation} \label{eqn:KSNN-sol-reduced}
	A(\boldsymbol{\mu}^*) := [ \ \alpha^0(\boldsymbol{\mu}^*) \ | \ \alpha^1(\boldsymbol{\mu}^*) \ | \ \dots \ | \ \alpha^{N_t}(\boldsymbol{\mu}^*) \ ] = \tilde{\Phi}^\top(\boldsymbol{\mu}^*) U^I(\boldsymbol{\mu}^*). 
\end{equation}
Here, $\alpha^j \in \mathbb{R}^{r_*}$ represents the reduced solution at time $t_j$. To obtain the reduced solution at any arbitrary time, an interpolant $t$-KSNN is formed in the time domain by following \cref{eqn:KSNN-vector-map},
\begin{equation} \label{eqn:query-single-KSNN-surrogate-time-domain}
	\mathcal{I}^t_{\mu^*}(t) = \left[ \begin{matrix} \sum_{i=0}^{N_t} \hat{w}_{i}^{(1)} \phi(\| t - t_i \|; \hat{\epsilon}_i) \\ \sum_{i=0}^{N_t} \hat{w}_{i}^{(2)} \phi(\| t - t_i \|; \hat{\epsilon}_i) \\ \vdots \\ \sum_{i=0}^{N_t} \hat{w}_{i}^{(r_*)} \phi(\| t - t_i \|; \hat{\epsilon}_i) \end{matrix} \right],
\end{equation}
where $\mathcal{I}^t_{\mu^*}(t_j) = \alpha^j(\boldsymbol{\mu}^*)$ is enforced for all $j = \{0, 1, \dots, N_t\}$ to train $t$-KSNN. 

The reduced solution at some new time instance $t^*$ is obtained by querying $\mathcal{I}^t_{\mu^*}$, 
\begin{equation} \label{eqn:reduced-coord-new-time}
	\alpha^I(t^*) := \mathcal{I}^t_{\mu^*}(t^*).
\end{equation}
To obtain the high-dimensional representation of the solution at $t^*$, we use the POD modes from \cref{eqn:pod-ksnn-bases},
\begin{equation} \label{eqn:sol-new-time}
	u_s(t^*, \boldsymbol{\mu}^*) = \tilde{\Phi}(\boldsymbol{\mu}^*) \ \alpha^I(t^*).
\end{equation}
We summarize the offline and online phase of the POD-KSNN ROM in \Cref{alg:POD-KSNN-offline,alg:POD-KSNN-online}.

\begin{algorithm}[!h]
	\caption{\texttt{Offline phase of POD-KSNN ROM}} 
	\label{alg:POD-KSNN-offline}  
	\begin{algorithmic}[1]
		\REQUIRE{Parameter set $\hat{P}$, snapshots $U(\boldsymbol{\mu}_i) \ \forall \boldsymbol{\mu}_i \in \hat{P}$, energy criterion $\hat{\eta}$.}
		\ENSURE{$\mu$-KSNN: $\mathcal{I}^\mu(\boldsymbol{\mu})$.}
		\STATE Construct $\mathcal{I}^\mu(\cdot)$ as in \cref{eqn:mu-KSNN-interpolant}  by building and training a KSNN.
	\end{algorithmic}
\end{algorithm}

\begin{algorithm}[!h]
	\caption{\texttt{Online phase of POD-KSNN ROM}} 
	\label{alg:POD-KSNN-online}
	\begin{algorithmic}[1]
		\REQUIRE{New parameter $\boldsymbol{\mu}^*$, new time $t^*$, energy criterion $\hat{\eta}$, $\mu$-KSNN: $\mathcal{I}^\mu(\boldsymbol{\mu})$.}
		\ENSURE{POD-KSNN solution at ($t^*$, $\boldsymbol{\mu}^*$).}
		\STATE Evaluate $U^I(\boldsymbol{\mu}^*)$ via \cref{eqn:query-KSNN-surrogates} by using $\mu$-KSNN $\mathcal{I}^\mu(\cdot)$.
		\STATE Obtain POD bases $\tilde{\Phi}(\boldsymbol{\mu}^*)$ via \cref{eqn:pod-ksnn-bases} by maintaining $\hat{\eta}$.
		\STATE Compute the reduced solution $A(\boldsymbol{\mu}^*)$ via \cref{eqn:KSNN-sol-reduced}.
		\STATE Construct $t$-KSNN $\mathcal{I}^t_{\mu^*}$ as in \cref{eqn:query-single-KSNN-surrogate-time-domain} by building and training a KSNN.
		\STATE Evaluate $\alpha^I(t^*)$ via \cref{eqn:reduced-coord-new-time}.
		\STATE Compute the POD-KSNN ROM solution $u_s(t^*, \boldsymbol{\mu}^*)$ using \cref{eqn:sol-new-time}.
	\end{algorithmic}
\end{algorithm}

\subsubsection{SDE-ActLearn-POD-KSNN ROM}

The subspace-distance-enabled active-learning-driven POD-KSNN (SDE-ActLearn-POD-KSNN) ROM can be constructed as outlined in \Cref{alg:SDE-ActLearn-POD-KSNN-offline-Variant-A,alg:SDE-ActLearn-POD-KSNN-offline-Variant-B}. The ROM is constructed by iteratively sampling new locations in the parameter domain using either \Cref{alg:sde-actlearn-A} or \Cref{alg:sde-actlearn-B} and then constructing the $\mu$-KSNN surrogate. Two variants of the algorithm---one where the ROM is constructed such that a predefined computational budget is met, the other where the ROM is constructed such that a user-defined distance and error tolerances are met---are outlined in \Cref{alg:SDE-ActLearn-POD-KSNN-offline-Variant-A,alg:SDE-ActLearn-POD-KSNN-offline-Variant-B}, respectively. The online stage of SDE-ActLearn-POD-KSNN ROM stays the same as \Cref{alg:POD-KSNN-online}.

\begin{algorithm}[!h]
	\caption{\texttt{Offline phase of SDE-ActLearn-POD-KSNN ROM: Variant-A}} 
	\label{alg:SDE-ActLearn-POD-KSNN-offline-Variant-A}  
	\begin{algorithmic}[1]
		\REQUIRE{Initial parameter set $P$, $U(\boldsymbol{\mu}_i) \ \forall \boldsymbol{\mu}_i \in P$, candidate parameter set $P^*$, energy criterion $\eta$, {\color{darkgray} maximum number of permissible iterations $max\_iter$}.}
		\ENSURE{$\mu$-KSNN surrogate $\mathcal{I}^\mu(\boldsymbol{\mu})$.}
		\STATE Obtain the updated parameter set $P$ and snapshots $U(\boldsymbol{\mu}_i) \ \forall \boldsymbol{\mu}_i \in P$ by executing \Cref{alg:sde-actlearn-A}.\hspace{-5pt}
		\STATE $\hat{P} = P$, $\hat{\eta} = \eta$.
		\STATE Construct $\mathcal{I}^\mu(\boldsymbol{\mu})$ by executing \Cref{alg:POD-KSNN-offline}.
	\end{algorithmic}
\end{algorithm}

\begin{algorithm}[!h]
	\caption{\texttt{Offline phase of SDE-ActLearn-POD-KSNN ROM: Variant-B}} 
	\label{alg:SDE-ActLearn-POD-KSNN-offline-Variant-B}  
	\begin{algorithmic}[1]
		\REQUIRE{Initial parameter set $P$, $U(\boldsymbol{\mu}_i) \ \forall \boldsymbol{\mu}_i \in P$, candidate parameter set $P^*$, energy criterion $\eta$, {\color{darkgray} subspace distance tolerance $tol_d$, ROM error tolerance $tol_e$}.}
		\ENSURE{$\mu$-KSNN surrogate $\mathcal{I}^\mu(\boldsymbol{\mu})$.}
		\STATE Obtain the updated parameter set $P$ and snapshots $U(\boldsymbol{\mu}_i) \ \forall \boldsymbol{\mu}_i \in P$ by executing \Cref{alg:sde-actlearn-B}.\hspace{-5pt}
		\STATE $\hat{P} = P$, $\hat{\eta} = \eta$.
		\STATE Construct $\mathcal{I}^\mu(\boldsymbol{\mu})$ by executing \Cref{alg:POD-KSNN-offline}.
	\end{algorithmic}
\end{algorithm}

\FloatBarrier
\subsection{POD-NN ROM}

The POD-NN method outlined in this subsection is as presented in \cite{morWanHR19} for parametrized dynamical systems. The original variant of the method first appeared in \cite{morHesU18}, which is only applicable to parametrized steady-state systems. To make the method applicable for time-dependent problems, authors in \cite{morWanHR19} resort to a two-step POD procedure. A temporal compression of the snapshots is done by performing a parameter-specific POD, which is then followed by a compression in the parameter space by applying POD to the collection of parameter-specific POD bases. The final set of dominant POD modes is then used to represent the parametric FOM snapshots in the reduced coordinates. A regression model is constructed for the reduced coordinates using a feedforward neural network, which can be queried at out-of-training time and parameter values. The high-dimensional solution at the new time and parameter value is obtained by using the predicted reduced coordinates and the final set of dominant POD modes. 

Consider that a sampling of the parameter domain is available as a set of parameters $\hat{P} = \{\boldsymbol{\mu}_i\}_{i=1}^{m}$. The solution snapshots for these parameters can be obtained from a FOM solver and can be written in the following fashion, as in \cref{eqn:sol-snap}:
\begin{equation*}
	U(\boldsymbol{\mu}_i) = [ \ \mathbf{u}(t_0,\boldsymbol{\mu}_i) \ | \ \mathbf{u}(t_1,\boldsymbol{\mu}_i) \ | \ \dots \ | \ \mathbf{u}(t_{N_t},\boldsymbol{\mu}_i) \ ],
\end{equation*}
where $i = \{1, 2, \dots, m\}$. An SVD is carried out for each parameter-specific snapshot matrix, like done in \cref{eqn:svd},
\begin{equation*}
	U(\boldsymbol{\mu}_i) = L(\boldsymbol{\mu_i}) \Sigma(\boldsymbol{\mu_i}) K^\top(\boldsymbol{\mu_i}).
\end{equation*}

We form the matrix $\Phi(\boldsymbol{\mu}_i)$ of dominant POD modes, for each parameter $\boldsymbol{\mu}_i$, by collecting the truncated left singular vectors,
\begin{equation} \label{eqn:pod-nn-bases-1}
	\Phi(\boldsymbol{\mu}_i) = [ \ \mathbf{l}_1(\boldsymbol{\mu}_i) \ | \ \mathbf{l}_2(\boldsymbol{\mu}_i) \ | \ \dots \ | \ \mathbf{l}_{r_i}(\boldsymbol{\mu}_i) \ ].
\end{equation}
Here, the truncation level $r_i$ is the lowest value such that the user-defined energy criterion $\hat{\eta}$ is satisfied:
\begin{equation} \label{eqn:pod-nn-energy-criterion-1}
	\hat{\eta} = 1 - \frac{\sum_{k=1}^{k=r_i} (\sigma^{(i)}_k)^2}{\sum_{k=1}^{k=s_i} (\sigma^{(i)}_k)^2},
\end{equation}
where $s_i$ is the total number of nonzero singular values sitting on the main diagonal of $\Sigma(\boldsymbol{\mu}_i)$.

All the parameter-specific POD modes are collected and an additional SVD is carried out as
\begin{equation} \label{eqn:pod-nn-svd-2}
	[ \ \Phi(\boldsymbol{\mu}_1) \ | \ \Phi(\boldsymbol{\mu}_2) \ | \ \dots \ | \ \Phi(\boldsymbol{\mu}_m) \ ] = \tilde{L} \tilde{\Sigma} \tilde{K}^\top,
\end{equation}
where $\tilde{L} = [ \tilde{\mathbf{l}}_1 | \tilde{\mathbf{l}}_2 | \dots | \tilde{\mathbf{l}}_N ] \in \mathbb{R}^{N \times N}$, $\tilde{K} \in \mathbb{R}^{(\sum_{i=1}^{m} r_i) \times (N_t+1)}$ are orthogonal matrices with their columns as the left and right singular vectors, respectively. The non-zero singular values in \cref{eqn:pod-nn-svd-2} are denoted by $\tilde{\sigma}_k$, $k = 1, \dots, s_{global}$. The matrix $\mathbb{V}$ representing the reduced basis is formed by selecting the first $r_{global} \le s_{global}$ dominant POD modes,
\begin{equation} \label{eqn:pod-nn-bases-2}
	\mathbb{V} = [ \ \tilde{\mathbf{l}}_1 \ | \ \tilde{\mathbf{l}}_2 \ | \ \dots \ | \ \tilde{\mathbf{l}}_{r_{global}} \ ].
\end{equation}
Here, the level of truncation $r_{global}$ is taken as the lowest value such that a user-defined energy criterion $\hat{\eta}_{global}$ is met:
\begin{equation} \label{eqn:pod-nn-energy-criterion-2}
	\hat{\eta}_{global} = 1 - \frac{\sum_{k=1}^{k=r_{global}} (\tilde{\sigma}_k)^2}{\sum_{k=1}^{k=s_{global}} (\tilde{\sigma}_k)^2}.
\end{equation}

The solution snapshots corresponding to each $\boldsymbol{\mu}_i \in \hat{P}$ are projected onto the subspace spanned by the reduced bases, so that their reduced representation is obtained,
\begin{equation*}
	B(\boldsymbol{\mu}_i) = \mathbb{V}^\top U(\boldsymbol{\mu}_i) = [ \ \beta^0(\boldsymbol{\mu}_i) \ | \ \beta^1(\boldsymbol{\mu}_i) \ | \ \dots \ | \ \beta^{N_t}(\boldsymbol{\mu}_i) \ ], 
\end{equation*}
where $\beta^j \in \mathbb{R}^{r_{global}}$ is the reduced solution at time $t_j$. To obtain the reduced solution at any arbitrary time and parameter instance, a deep feed-forward neural network surrogate is trained using the data $\{\beta^j(\boldsymbol{\mu}_i)\}_{j=0}^{N_t}$ for all $\boldsymbol{\mu}_i \in \hat{P}$, providing an NN-approximation for the reduced solution. The deep neural network surrogate generates a map $\hat{\beta}$,
\begin{align*}
	\hat{\beta}: [0, T] \times \mathcal{P} \subset \mathbb{R}^{N_\mu+1} &\rightarrow \mathbb{R}^{r_{global}},
	\\
	(t, \boldsymbol{\mu}) &\rightarrow \mathbb{V}^\top \mathbf{u}(t,\boldsymbol{\mu}).
\end{align*}
The POD-NN approximate solution $u_s$ at a new time $t^*$ and parameter $\boldsymbol{\mu}^*$ instance is obtained by,
\begin{equation*}
	u_s(t^*, \boldsymbol{\mu}^*) = \mathbb{V} \hat{\beta}(t^*, \boldsymbol{\mu}^*).
\end{equation*}
We summarize the offline and online phase of POD-NN ROM in \Cref{alg:POD-NN-offline,alg:POD-NN-online}.

\begin{algorithm}[!h]
	\caption{\texttt{Offline phase of POD-NN ROM}} 
	\label{alg:POD-NN-offline}  
	\begin{algorithmic}[1]
		\REQUIRE{Parameter set $\hat{P}$, snapshots $U(\boldsymbol{\mu}_i) \ \forall \boldsymbol{\mu}_i \in \hat{P}$, energy criteria $\hat{\eta}$ and $\hat{\eta}_{global}$.}
		\ENSURE{Reduced basis $\mathbb{V}$, NN surrogate $\hat{\beta}$.}
		\STATE Perform parameter-specific SVDs: $L(\boldsymbol{\mu_i}) \Sigma(\boldsymbol{\mu_i}) K^\top(\boldsymbol{\mu_i}) = U(\boldsymbol{\mu}_i)$.
		\STATE Determine truncation orders $r_i \ \forall \boldsymbol{\mu}_i \in \hat{P}$ using $\hat{\eta}$ via \cref{eqn:pod-nn-energy-criterion-1}.
		\STATE Create parameter-specific bases sets $\Phi(\boldsymbol{\mu}_i)$ by following \cref{eqn:pod-nn-bases-1}.
		\STATE Perform SVD by following \cref{eqn:pod-nn-svd-2}: $\tilde{L} \tilde{\Sigma} \tilde{K}^\top = [ \ \Phi(\boldsymbol{\mu}_1) \ | \ \Phi(\boldsymbol{\mu}_2) \ | \ \dots \ | \ \Phi(\boldsymbol{\mu}_m) \ ]$.
		\STATE Determine truncation order $r_{global}$ using $\hat{\eta}_{global}$ via \cref{eqn:pod-nn-energy-criterion-2}.
		\STATE Obtain the matrix $\mathbb{V}$ of reduced basis from \cref{eqn:pod-nn-bases-2}.
		\STATE Extract the reduced representation of training snapshots: $B(\boldsymbol{\mu}_i) = \mathbb{V}^\top U(\boldsymbol{\mu}_i) \ \forall \boldsymbol{\mu}_i \in \hat{P}$.
		\STATE Train a NN to obtain a regression model $\hat{\beta}$.
	\end{algorithmic}
\end{algorithm}

\begin{algorithm}[!h]
	\caption{\texttt{Online phase of POD-NN ROM}} 
	\label{alg:POD-NN-online}
	\begin{algorithmic}[1]
		\REQUIRE{New parameter $\boldsymbol{\mu}^*$, new time $t^*$, $\mathbb{V}$, NN surrogate $\hat{\beta}$.}
		\ENSURE{POD-NN solution at ($t^*$, $\boldsymbol{\mu}^*$).}
		\STATE Evaluate the output of the NN surrogate at input $(t^*, \boldsymbol{\mu}^*)$, i.e., $\hat{\beta}(t^*, \boldsymbol{\mu}^*)$.
		\STATE Compute the POD-NN solution: $u_s(t^*, \boldsymbol{\mu}^*) = \mathbb{V} \hat{\beta}(t^*, \boldsymbol{\mu}^*)$.
	\end{algorithmic}
\end{algorithm}

\subsubsection{SDE-ActLearn-POD-NN ROM}

The subspace-distance-enabled active-learning-driven POD-NN (SDE-ActLearn-POD-NN) ROM can be constructed as outlined in \Cref{alg:SDE-ActLearn-POD-NN-offline-Variant-A,alg:SDE-ActLearn-POD-NN-offline-Variant-B}. The ROM is constructed by iteratively sampling new locations in the parameter domain using either \Cref{alg:sde-actlearn-A} or \Cref{alg:sde-actlearn-B}, and then constructing the reduced basis $\mathbb{V}$ and the neural network surrogate $\hat{\beta}$ for the reduced solution. Two variants of the algorithm---one where the ROM is constructed such that a predefined computational budget is met, and the other where the ROM is constructed such that a user-defined distance and error tolerances are met---are outlined in \Cref{alg:SDE-ActLearn-POD-NN-offline-Variant-A,alg:SDE-ActLearn-POD-NN-offline-Variant-B}. The online stage of SDE-ActLearn-POD-NN ROM stays the same as \Cref{alg:POD-NN-online}.

\begin{algorithm}[!h]
	\caption{\texttt{Offline phase of SDE-ActLearn-POD-NN ROM: Variant-A}} 
	\label{alg:SDE-ActLearn-POD-NN-offline-Variant-A}  
	\begin{algorithmic}[1]
		\REQUIRE{Initial parameter set $P$, $U(\boldsymbol{\mu}_i) \ \forall \boldsymbol{\mu}_i \in P$, candidate parameter set $P^*$, energy criterion $\eta$, {\color{darkgray} maximum permissible iterations $max\_iter$}, energy criterion $\hat{\eta}_{global}$.}
		\ENSURE{Reduced basis $\mathbb{V}$, NN surrogate $\hat{\beta}$.}
		\STATE Obtain the updated parameter set $P$ and snapshots $U(\boldsymbol{\mu}_i) \ \forall \boldsymbol{\mu}_i \in P$ by executing \Cref{alg:sde-actlearn-A}.\hspace{-5pt}
		\STATE $\hat{P} = P$, $\hat{\eta} = \eta$.
		\STATE Construct $\mathbb{V}$ and surrogate $\hat{\beta}$ by executing \Cref{alg:POD-NN-offline}.
	\end{algorithmic}
\end{algorithm}

\begin{algorithm}[!h]
	\caption{\texttt{Offline phase of SDE-ActLearn-POD-NN ROM: Variant-B}} 
	\label{alg:SDE-ActLearn-POD-NN-offline-Variant-B}  
	\begin{algorithmic}[1]
		\REQUIRE{Initial parameter set $P$, $U(\boldsymbol{\mu}_i) \ \forall \boldsymbol{\mu}_i \in P$, candidate parameter set $P^*$, energy criterion $\eta$, {\color{darkgray} subspace distance tolerance $tol_d$, ROM error tolerance $tol_e$}, energy criterion $\hat{\eta}_{global}$.}
		\ENSURE{Reduced basis $\mathbb{V}$, NN surrogate $\hat{\beta}$.}
		\STATE Obtain the updated parameter set $P$ and snapshots $U(\boldsymbol{\mu}_i) \ \forall \boldsymbol{\mu}_i \in P$ by executing \Cref{alg:sde-actlearn-B}.\hspace{-5pt}
		\STATE $\hat{P} = P$, $\hat{\eta} = \eta$.
		\STATE Construct $\mathbb{V}$ and surrogate $\hat{\beta}$ by executing \Cref{alg:POD-NN-offline}.
	\end{algorithmic}
\end{algorithm}

\FloatBarrier

\section{Numerical Results}%
\label{sec:num-exp}

We demonstrate the applicability of the SDE-AL framework for two parameter-dependent dynamical systems. The first system corresponds to a problem setup featuring fluid flow behavior governed by the shallow water equations, which is challenging due to the numerous shock interactions across the spatial domain as the system evolves in time. The nature of these interactions is affected by the fluid viscosity. Next, we consider a problem showcasing a fluid flow with a vortex shedding phenomena governed by the incompressible Navier-Stokes equations. Here, the shedding behavior depends upon the Reynolds number under consideration. 

For brevity, we present the results from the SDE-ActLearn-POD-KSNN ROM for the shallow water equations, whereas, the results from the SDE-ActLearn-POD-NN ROM are presented for the Navier-Stokes equations. Moreover, variant-A of the SDE-AL framework, i.e., \Cref{alg:sde-actlearn-B}, is tested for the shallow water equations example, whereas, variant-B of the SDE-AL framework, i.e., \Cref{alg:sde-actlearn-A}, is tested for the Navier-Stokes equations example. Furthermore, for the shallow water equations example, we also provide a comparison between the SDE-AL results obtained while using $\mathcal{D}_1$ and those while using $\hat{\mathcal{D}}_2$. This comparison highlights the benefit of using the newly proposed distance metric for performing SDE-AL.

The rest of this section outlines both the problem setups in detail and provides the results obtained from the numerical experiments. For details about the parametric interpolant structure ($\mu$-KSNN) and the temporal interpolant structure ($t$-KSNN) for SDE-ActLearn-POD-KSNN, as well as the NN structure for SDE-ActLearn-POD-NN, please refer \Cref{appendix:implementation-details}.  

\subsection{Shallow Water Equations}%
\label{subsec:swe}

The shallow water equations are typically used to model the behavior of water bodies like a lake or canal, as well as in weather forecasting~\cite{CouG1988,Cha04}. We consider them in the following parametric form:
\begin{align*}
	\partial_t h + \partial_x (h u) &= 0, \\
	\partial_t(h u) + \partial_x \Big( h u^2 + \frac{1}{2} g h^2 \Big) &= - \frac{\nu}{\lambda} u,
\end{align*}
where $h(t,x)$ is the fluid height, $u(t,x)$ is the depth-averaged velocity, $g$ is the gravitational acceleration, $\nu$ is the fluid's dynamic viscosity, and $\lambda$ is the fluid's mean-free path. The initial fluid height and velocity are prescribed as follows:
\begin{align*}
	h(0,x) &= 1 + \exp(3 \cos(\pi (x + 0.5)) - 4),
	\\
	u(0,x) &= 0.25.
\end{align*}

The problem setup and the FOM solver are kept similar to~\cite{morKapFB24}, allowing us to easily compare the results of the two active learning approaches. The system of equations is solved using the discontinuous Galerkin method, where the local Lax–Friedrichs Riemann solver is employed to evaluate the numerical flux, and a second-order strong stability preserving Runge–Kutta scheme is used for time integration. We fix $\lambda=0.1$ and consider $\nu$ as the free parameter, ranging from $1$ to $10^{-5}$. Furthermore, we consider $100$ viscosity values in total, accounting for both the parameter sets $P$ and $P^*$. The parameter samples considered during active learning are $100$ evenly distributed logarithmic $\nu$ values in $[\log_{10}(10^{-5}), \log_{10}(1)]$.  The spatial domain $\Omega \in [-1, 1]$ has $601$ grid nodes and is equipped with periodic boundary conditions. FOM solutions corresponding to each training parameter sample are collected at $200$ uniform time steps in $t \in [0, 2]$, creating parameter-specific snapshot matrices.

\begin{figure}[!b] %
	\centering
	\begin{subfigure}[b]{0.35\textwidth}
			\centering
			\includegraphics[width=1\textwidth, trim=0 0 0 0, clip]{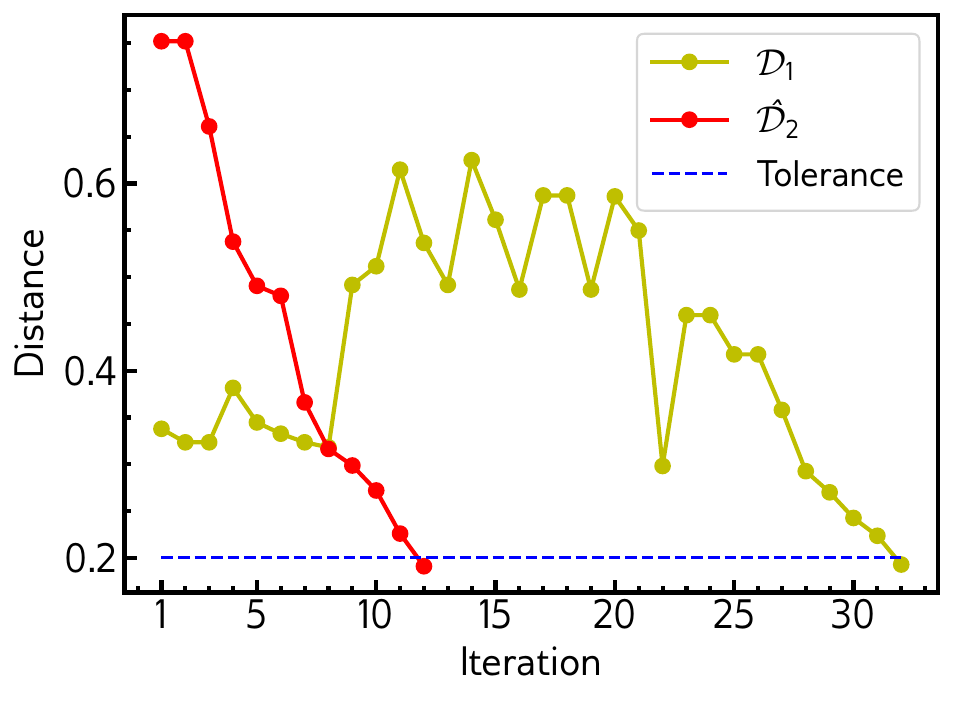}
			\caption{}
			\label{fig:swe-active-learn-a}
	\end{subfigure}
	\hspace{2em}
	\begin{subfigure}[b]{0.35\textwidth}
			\centering
			\includegraphics[width=1\textwidth, trim=0 0 0 0, clip]{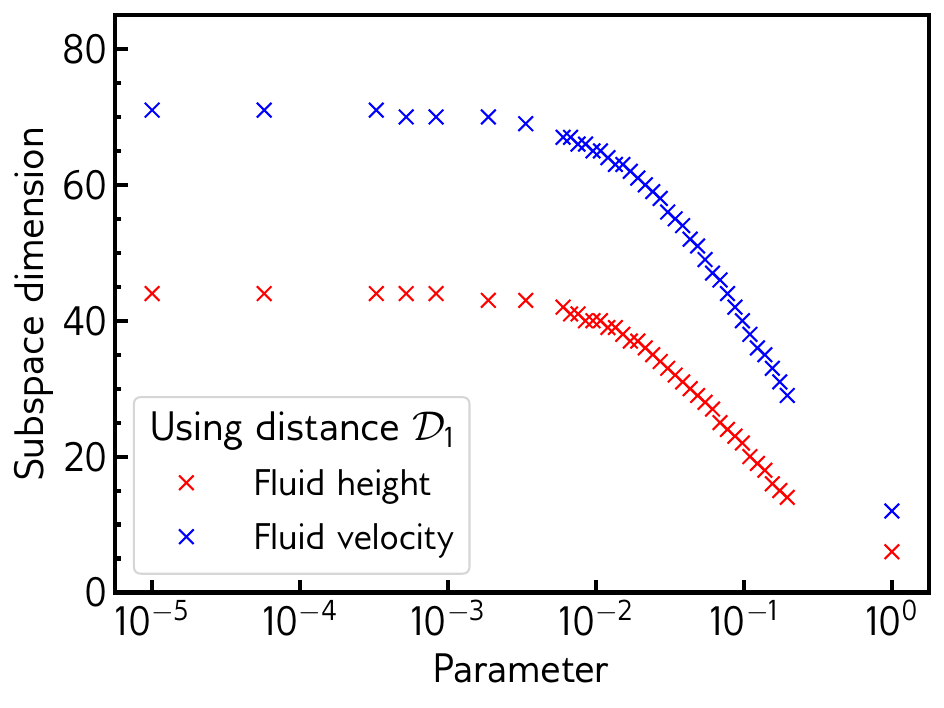}
			\caption{}
			\label{fig:swe-active-learn-b}
	\end{subfigure}
	\hspace{2em}
	\begin{subfigure}[b]{0.35\textwidth}
		\centering
		\includegraphics[width=1\textwidth, trim=0 0 0 0, clip]{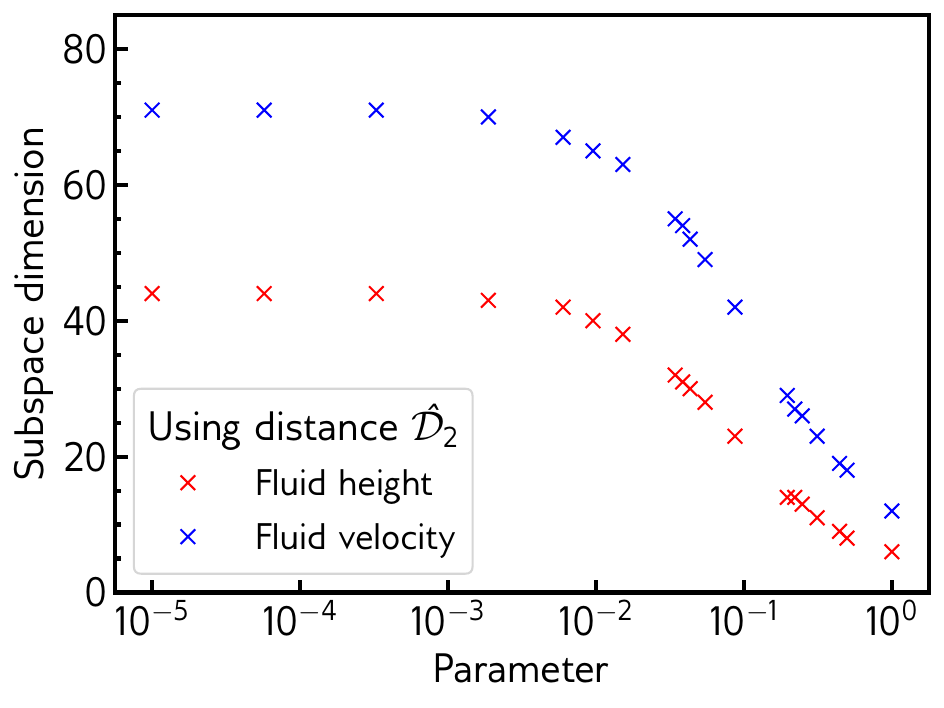}
		\caption{}
		\label{fig:swe-active-learn-c}
	\end{subfigure}
	\caption{Shallow water equations: Comparison between subspace-distance-enabled active learning procedures with distance criteria $\mathcal{D}_1$ and $\hat{\mathcal{D}}_2$. (a)~shows the variation in the maximal distance value  $\mathcal{D}_{max}^{(iter)}$ with iterations. For better readability, and to easily distinguish between the two cases, $\mathcal{D}_1$ and $\hat{\mathcal{D}}_2$ are used in the legend. (b)~and~(c)~show the selected parameter samples along with the dimensions of their respective subspace pairs, obtained by employing $\mathcal{D}_1$ and $\hat{\mathcal{D}}_2$, respectively.}
	\label{fig:swe-active-learn}
\end{figure}

The SDE-ActLearn-POD-KSNN ROM is constructed as outlined in \Cref{alg:SDE-ActLearn-POD-KSNN-offline-Variant-B}. The active learning procedure is initiated by picking $8$ viscosity values, with the following indices:
\begin{equation*}
	\{0,99,15,30,45,55,70,85\}.
\end{equation*} 
Here, the index starts from $0$ when counting the $100$ viscosity values ordered in an ascending fashion. The set $P$ is initialized by these $8$ parameter samples. For each of these viscosity values, a snapshot matrix is constructed by querying the FOM solver. Furthermore, using the snapshot matrices, parameter-specific subspaces are formed, retaining $99.9999\%$ of the total energy, which corresponds to $\eta = 10^{-6}$.

The distances between all neighboring parameter-specific subspaces are evaluated and the parameter pair with the most distant subspaces is identified, followed by the selection of a new sample from the candidate set $P^*$ as outlined in \Cref{alg:sde-actlearn-B}. This process is continued iteratively by updating the neighboring parameter pairs, evaluating the distance between new pairs, and selecting new samples from $P^*$ in parameter regions surrounded by the subspace pair that are farthest away. 

We compare two active learning procedures following \Cref{alg:sde-actlearn-B}, by using $\mathcal{D}_1$ and $\hat{\mathcal{D}}_2$. When using $\mathcal{D}_1$, the variation of maximal distance value  $\mathcal{D}_{max}^{(iter)}$ with iterations is quite erratic and takes $31$ parameter selections to attain a distance tolerance $tol = 0.2$, and an error tolerance $tol = 10^{-2}$. Whereas, with distance criterion $\hat{\mathcal{D}}_2$, the maximal distance is seen to monotonically decrease, also at a faster rate. Moreover, we notice that the SDE-AL procedure employing $\hat{\mathcal{D}}_2$ converges with just $11$ new parameter selections.

\Cref{fig:swe-active-learn-b,fig:swe-active-learn-c} show the actively sampled parameters and the dimension of their corresponding subspaces. We observe that in regions where the solution behavior changes significantly, relatively more samples are being selected, which is also accompanied by a considerable variation in the POD subspace dimension. From \Cref{fig:swe-active-learn-b,fig:swe-active-learn-c}, we can also deduce that while using $\mathcal{D}_1$, as opposed to $\hat{\mathcal{D}}_2$, apart from oversampling of the parameter space, the sampling pattern is also slightly different.

\begin{figure}[!b] 
	\centering
	
	\includegraphics[width=0.47\textwidth, trim=0 0 0 0, clip]{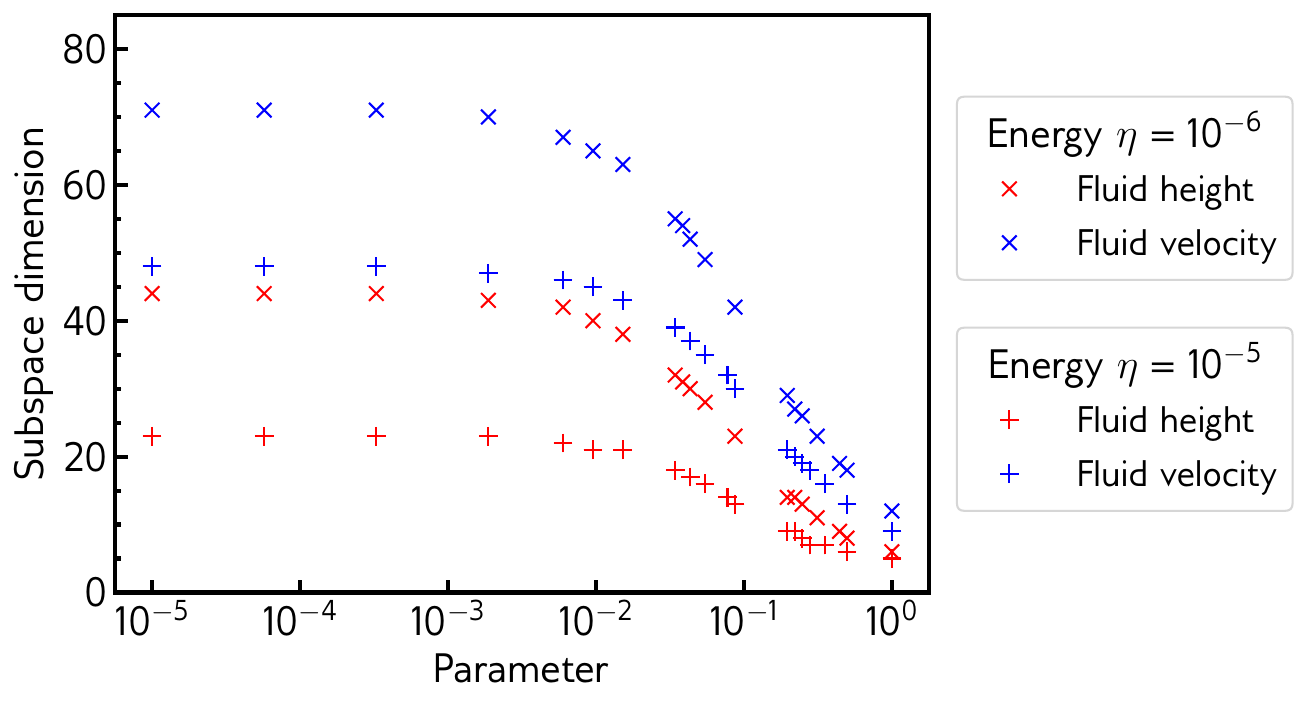}
	
	\caption{Shallow water equations: Comparison of the parameter selection and their respective POD subspace dimension upon performing subspace-distance-enabled active learning with different values of energy criterion $\eta$.}
	
	\label{fig:swe-active-learn-energy-compare}
\end{figure}

The SDE-ActLearn-POD-KSNN ROMs are queried at the following out-of-training viscosity values: $\{5 \times 10^{-1}, 5 \times 10^{-2}, 5 \times 10^{-3}, 5 \times 10^{-4}, 5 \times 10^{-5}\}$. By querying the ROM constructed using the distance criterion $\hat{\mathcal{D}}_2$, \Cref{fig:swe-h-rom-sol-surf,fig:swe-u-rom-sol-surf} show the fluid height and velocity predictions at three representative test samples, along with their respective relative solution errors, in the entire space-time domain. The reported relative error values are the absolute difference between the FOM and ROM solutions, normalized by the $2$-norm of the FOM solution. It is clear from \Cref{fig:swe-h-rom-sol-surf,fig:swe-u-rom-sol-surf} that the ROM is able to generate very accurate solution profiles, successfully capturing the multiple shock interactions over time. A similar solution accuracy is observed for the SDE-ActLearn-POD-KSNN ROM constructed using the criterion $\mathcal{D}_1$. As a result, to avoid similar figures, we do not show the solution and error contour plots for it. 

Due to a sparser sample selection with a similar level of accuracy, it is computationally beneficial to employ the distance measure $\hat{\mathcal{D}}_2$ during the SDE-AL procedure. Moreover, it is also crucial to note that while using $\hat{\mathcal{D}}_2$, the SDE-AL procedure is not sensitive to the choice of the energy criterion. This is evident from \Cref{fig:swe-active-learn-energy-compare} which compares the SDE-AL parameter selection using $\hat{\mathcal{D}}_2$ corresponding to two distinct energy criterion values, namely, $10^{-5}$ and $10^{-6}$. This choice affects the parameter-specific subspaces that are used to evaluate the (dis)similarity between the parametric solution snapshots. We notice that relaxing the amount of energy retained in the parameter-specific subspaces to $10^{-5}$ reduces the dimension of the parameter-specific subspaces, but still results in a near identical sampling of the parameter space. Due to this robustness of the sample selection with regards to the amount of information retained in the subspaces, we can in principle work with a set of compact subspaces and still obtain a similar parameter sampling.

\begin{figure}[!t]
	\centering
	\begin{subfigure}[b]{0.32\textwidth}
			\centering
			\includegraphics[width=.95\textwidth, trim=0 0 0 70, clip]{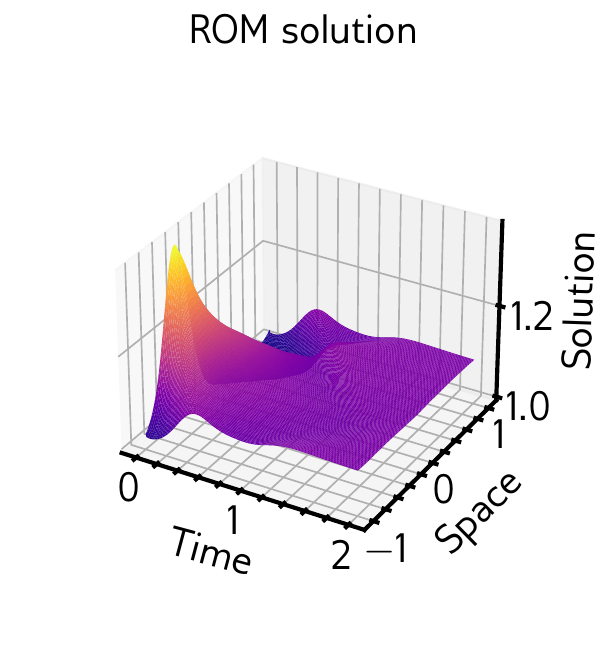}
	\end{subfigure}
	\begin{subfigure}[b]{0.32\textwidth}
		\centering
		\includegraphics[width=0.95\textwidth, trim=0 0 0 70, clip]{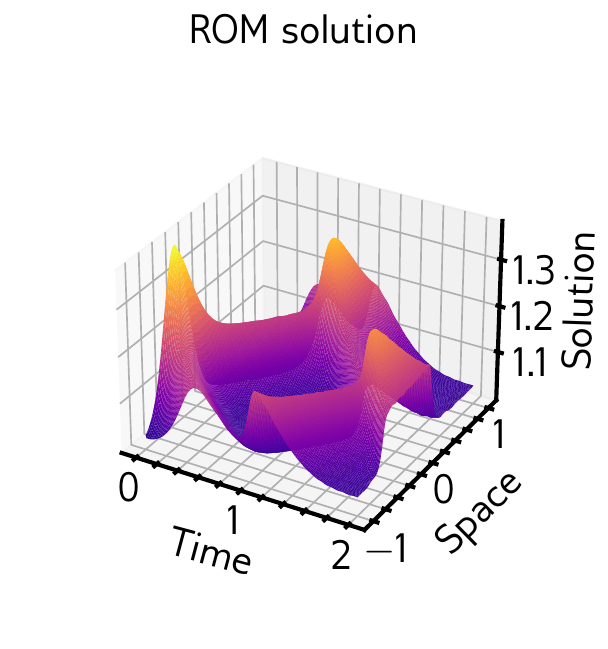}
	\end{subfigure}
	\begin{subfigure}[b]{0.32\textwidth}
		\centering
		\includegraphics[width=0.95\textwidth, trim=0 0 0 70, clip]{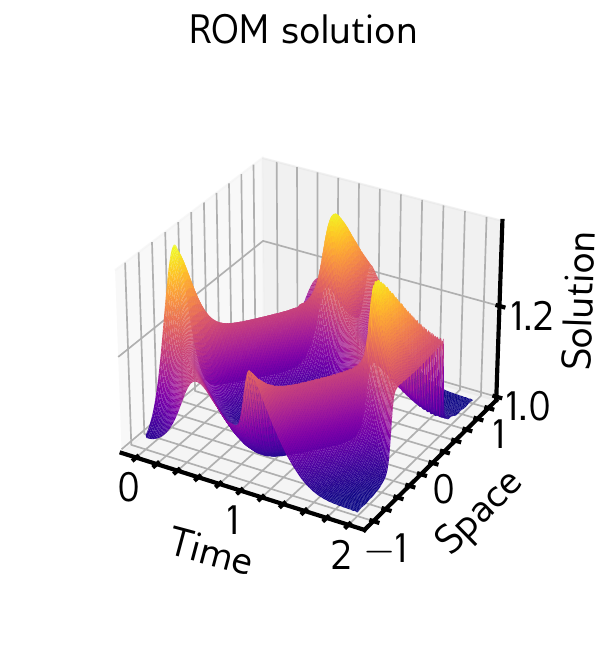}
	\end{subfigure}
	\begin{subfigure}[b]{0.31\textwidth}
			\centering
			\includegraphics[width=0.85\textwidth, trim=0 0 0 40, clip]{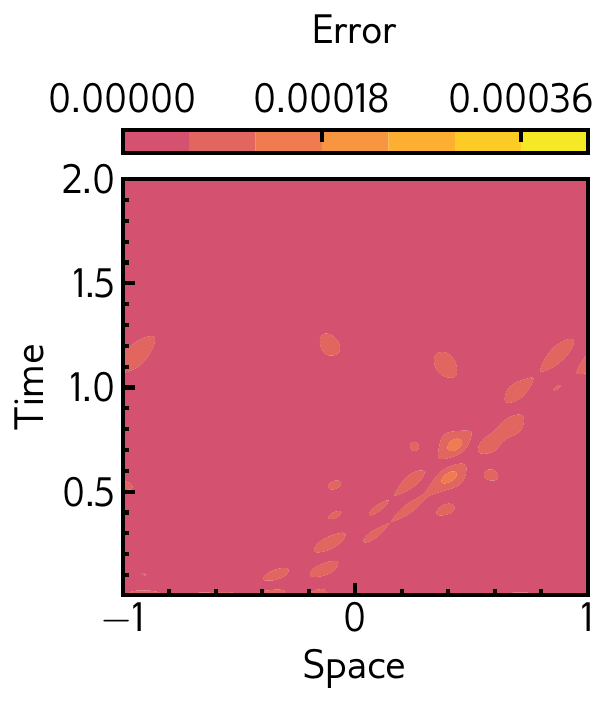}
	\end{subfigure}
	\begin{subfigure}[b]{0.31\textwidth}
			\centering
			\includegraphics[width=0.85\textwidth, trim=0 0 0 40, clip]{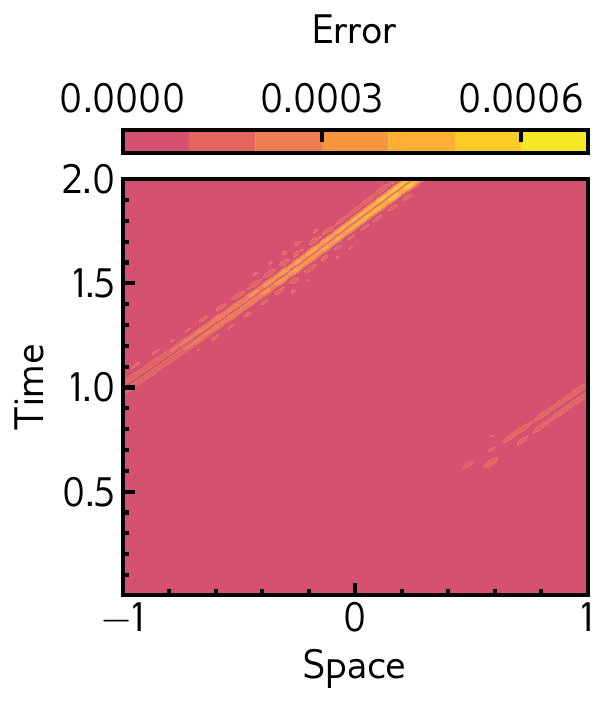}
	\end{subfigure}
	\begin{subfigure}[b]{0.31\textwidth}
			\centering
			\includegraphics[width=0.85\textwidth, trim=0 0 0 40, clip]{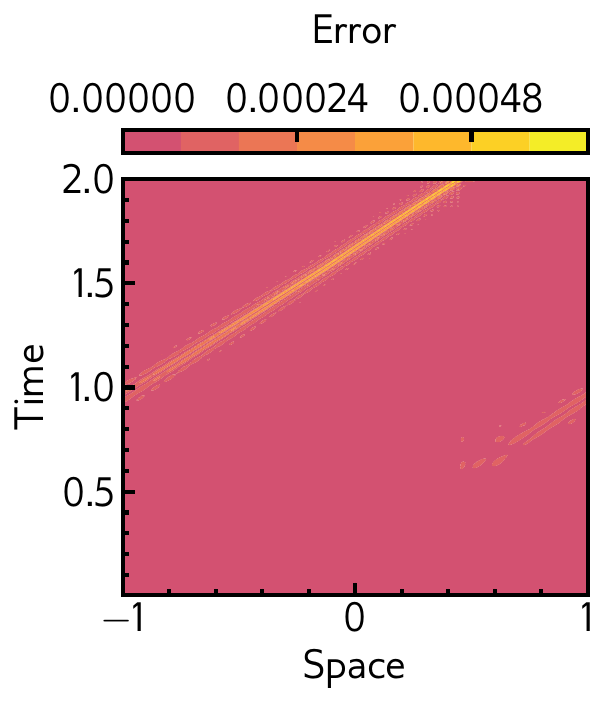}
	\end{subfigure}
	\caption{Shallow water equations' fluid height: The SDE-ActLearn-POD-KSNN solution (shown in first row), and the relative solution error (shown in second row). The viscosity values $\nu$ going from left to right are in the following order: $\{5 \times 10^{-1}, 5 \times 10^{-2}, 5 \times 10^{-5}\}$. All these $\nu$ values and time instances are outside of the training set. The active learning process is carried out using the distance criterion $\hat{\mathcal{D}}_2$.} 
	\label{fig:swe-h-rom-sol-surf}
\end{figure}

\begin{figure}[!t] 
	\centering
	\begin{subfigure}[b]{0.32\textwidth}
			\centering
			\includegraphics[width=0.95\textwidth, trim=0 0 0 70, clip]{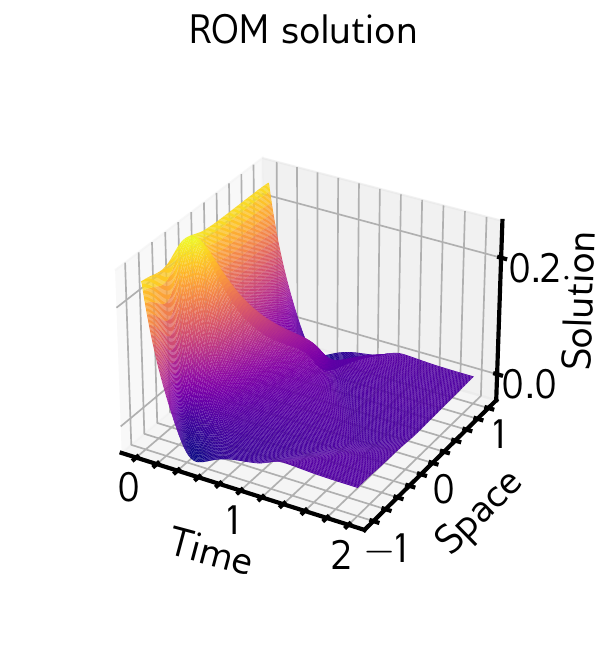}
	\end{subfigure}
	\begin{subfigure}[b]{0.32\textwidth}
		\centering
		\includegraphics[width=0.95\textwidth, trim=0 0 0 70, clip]{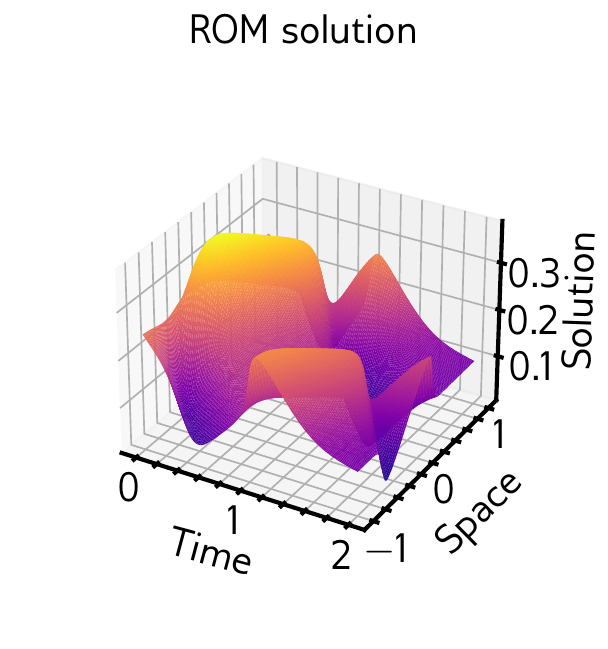}
	\end{subfigure}
	\begin{subfigure}[b]{0.32\textwidth}
		\centering
		\includegraphics[width=0.95\textwidth, trim=0 0 0 70, clip]{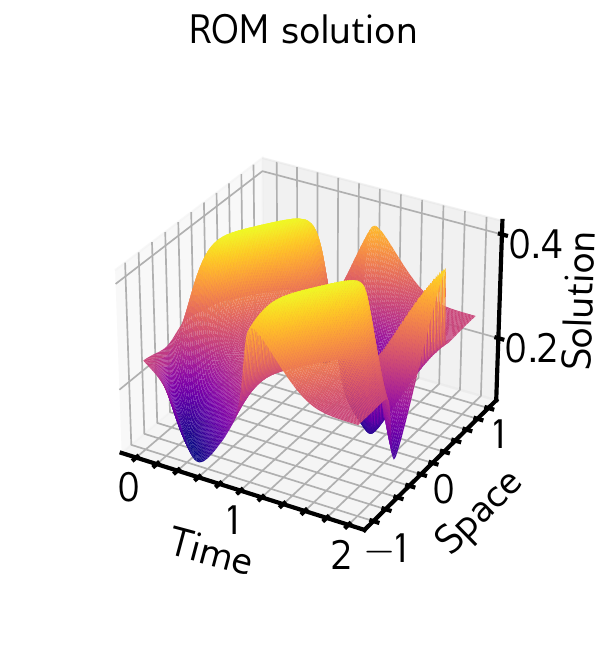}
	\end{subfigure}
	\begin{subfigure}[b]{0.31\textwidth}
			\centering
			\includegraphics[width=0.85\textwidth, trim=0 0 0 40, clip]{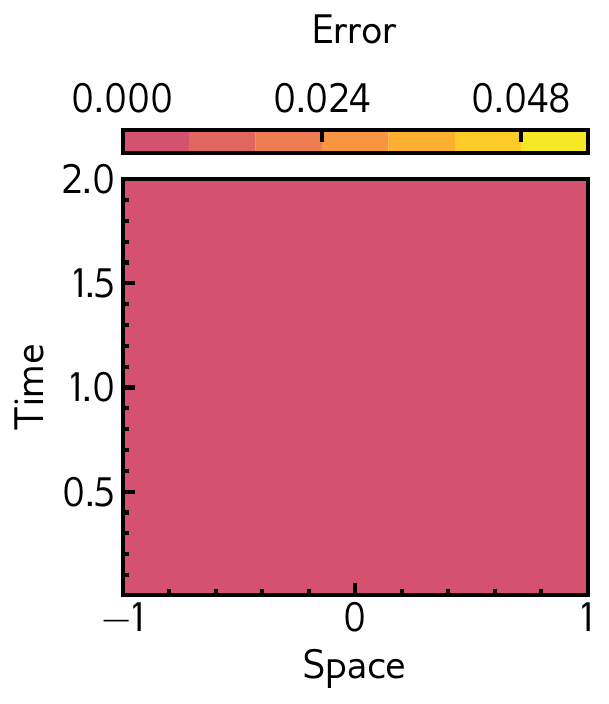}
	\end{subfigure}
	\begin{subfigure}[b]{0.31\textwidth}
			\centering
			\includegraphics[width=0.85\textwidth, trim=0 0 0 40, clip]{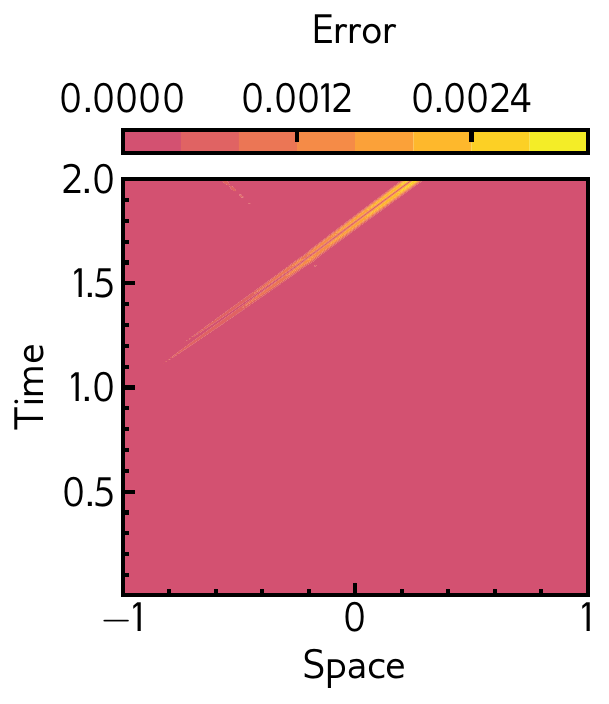}
	\end{subfigure}
	\begin{subfigure}[b]{0.31\textwidth}
			\centering
			\includegraphics[width=0.85\textwidth, trim=0 0 0 40, clip]{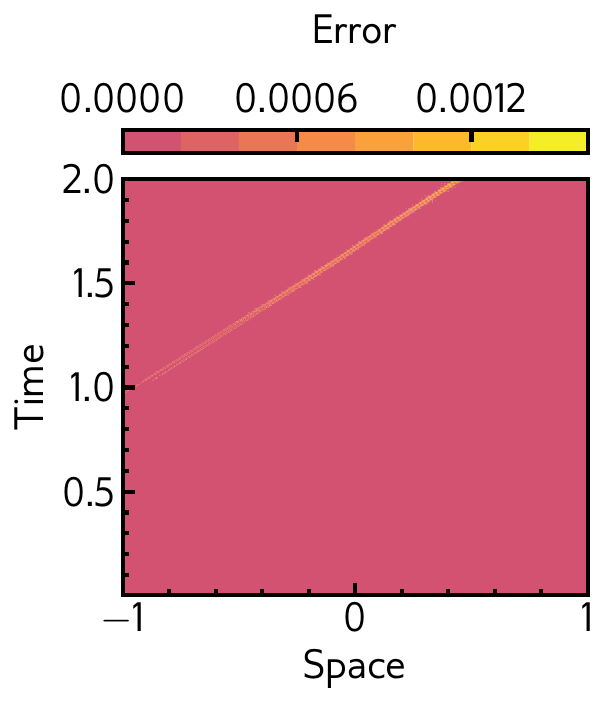}
	\end{subfigure}
	\caption{Shallow water equations' fluid velocity: The SDE-ActLearn-POD-KSNN solution (shown in first row), and the relative solution error (shown in second row). The viscosity values $\nu$ going from left to right are in the following order: $\{5 \times 10^{-1}, 5 \times 10^{-2}, 5 \times 10^{-5}\}$. All these $\nu$ values and time instances are outside of the training set. The active learning process is carried out using the distance criterion $\hat{\mathcal{D}}_2$.} 
	\label{fig:swe-u-rom-sol-surf}
\end{figure}

\Cref{fig:swe-avg-test-error-in-time} shows the relative error behavior for the SDE-ActLearn-POD-KSNN ROM solution along the time domain. More precisely, the reported error values at any time instance correspond to the $2$-norm of the solution error, normalized by the $2$-norm of the true solution at that time instance. The ROM is queried at a test time grid comprising of $499$ time steps starting from $0.004$, with a step size of $0.004$. Moreover, the error curves correspond to the average relative error over all five parameter test samples. It is clear from \Cref{fig:swe-avg-test-error-in-time} that the ROM solution error is bounded from above by the user-defined error tolerance of $10^{-2}$, rendering an accurate active-learning-driven ROM.

The computational runtime for the FOM solver and the SDE-ActLearn-POD-KSNN ROM is given in \Cref{tab:swe-timings}. The numerical experiments are undertaken on a system with an Intel{\small\textsuperscript{\textregistered}} Core{\texttrademark} i7-11800H @ 2.30GHz central processing unit and 32 GB of RAM. The creation of parameter-specific POD subspaces, the numerous distance evaluations during the active learning procedure, and the error estimator construction during the final stage of active learning, amount to $0.3928$ seconds, reported as SDE-AL in the table. Whereas, the time taken for the construction of $\mu$-KSNN in the offline phase is $1.0053$ seconds, which is reported under `Remaining' in the table. As the online query time for the ROM at a new parameter sample is $0.0122$, we observe a computational speedup of greater than $\mathcal{O}(10^{4})$ in comparison to the FOM solver query time. Moreover, we would like to highlight that the offline phase cost associated to the SDE-ActLearn-POD-KSNN ROM is significantly less than our previously proposed ActLearn-POD-KSNN ROM~\cite{morKapFB24}, which is built using an error-estimator-based active learning strategy. This further exhibits the advantage of our new SDE-AL framework.

\begin{figure}[!t]
	\centering
	
	\includegraphics[width=0.51\textwidth, trim=0 0 0 0, clip]{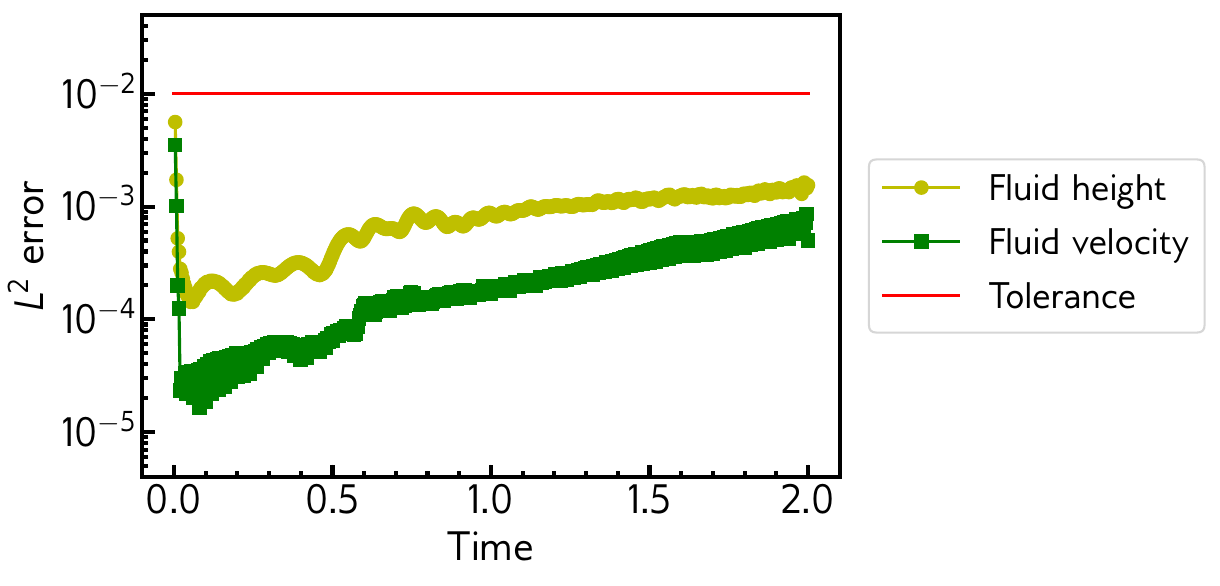}
	
	\caption{Shallow water equations: The relative SDE-ActLearn-POD-KSNN ROM error across the test time grid. The curves report the average error behavior for all five test samples in the parameter domain. The tolerance curve represents the $tol_e$ value appearing in \Cref{alg:sde-actlearn-B}.}
	
	\label{fig:swe-avg-test-error-in-time}
\end{figure}

\begin{table}[!h]  
	\begin{center}
		\begingroup
		\setlength{\tabcolsep}{10pt} 
		\renewcommand{\arraystretch}{1.5} 
		\begin{tabular}{| c || c | c | c | c |}
			\cline{1-5}
			\multirow{3}{*}{\begin{tabular}{@{}c@{}}FOM \\ solver\end{tabular}} & \multicolumn{4}{c|}{SDE-ActLearn-POD-KSNN ROM} \\ \cline{2-5}
			& \multicolumn{3}{c|}{Offline phase} & \multirow{2}{*}{\begin{tabular}{@{}c@{}}Online phase\end{tabular}} \\ 
			\cline{2-4}
			& SDE-AL & FOM queries & Remaining & \\
			\hline 
			\qty{249.56}{\second} & \qty{0.3928}{\second} & \num{19} $\times$ \qty{249.56}{\second} & \qty{1.0053}{\second} & \qty{0.0122}{\second} \\
			\hline
			\multicolumn{1}{c|}{} & \multicolumn{3}{c|}{Total: \qty{4743.03}{\second}} \\
			\cline{2-4}
		\end{tabular}
		\endgroup
		\caption{Shallow water equations: Runtime of the FOM and the SDE-ActLearn-POD-KSNN ROM constructed using the distance criterion $\hat{\mathcal{D}}_2$. The offline phase runtime for the ROM construction is reported in three parts: SDE-AL, FOM queries, and Remaining. The SDE-AL time corresponds to the time taken for creating the parameter-specific POD subspaces, along with all distance evaluations during the active learning iterations and the error estimator construction in the final stage. The time corresponding to FOM queries also includes the runtime of FOM simulations corresponding to the initial parameter set. The time required for all remaining computations in the offline phase is reported under `Remaining'.} %
		\label{tab:swe-timings}
	\end{center}
\end{table}

\FloatBarrier


\subsection{Incompressible Navier-Stokes Equations}%
\label{subsec:nse}

The incompressible Navier-Stokes equations are used to model the behavior of viscous fluids. The following form of the $2D$ Navier-Stokes equations is considered:
\begin{align*}
	\rho \left( \partial_t \mathbf{u} + \mathbf{u} \cdot \nabla \mathbf{u} \right) &= \mu \nabla \cdot \left(\nabla \mathbf{u} + (\nabla \mathbf{u})^T\right) - \nabla p,
	\\
	\nabla \cdot \mathbf{u} &= 0,
\end{align*}
where $\mathbf{u}(t,\mathbf{z}) \in \mathbb{R}^2$ is the velocity vector and $p(t,\mathbf{z}) \in \mathbb{R}$ is the fluid pressure, with time $t \ge 0$ and space $\mathbf{z}:= (x \times y) \in \Omega \subset \mathbb{R}^2$. The density and dynamic viscosity of the fluid are denoted by $\rho$ and $\mu$, respectively.

We consider a problem setup similar to the flow past a cylinder test case $2D$-$2$ from \cite{dfg-benchmark-ref}. A schematic of the spatial domain is shown in \Cref{fig:nse-setup}. The domain has a fluid inlet $\Gamma_{in}$ on the left boundary, a fluid outlet $\Gamma_{out}$ on the right boundary, and channel walls on the top and bottom boundaries, i.e., $\Gamma_{top}$ and $\Gamma_{bottom}$, respectively. The circular obstacle is shown in gray color in \Cref{fig:nse-setup} with its boundary denoted by $\Gamma_{cyl}$. Furthermore, the part of the domain where the fluid can flow is represented by $\Omega:=\Omega_{fluid}$. The exact form of the boundary conditions are given below:
\begin{align*}
	\mathbf{u} &= \left[ \frac{4 U y (0.41-y)}{0.41^2}, 0 \right]^\top, & \mathbf{z} \in \Gamma_{in},\\
	\mu \left( \nabla \mathbf{u} + (\nabla \mathbf{u})^\top\right) \cdot \mathbf{n} &= p \mathbf{n}, & \mathbf{z} \in \Gamma_{out},\\ 
	\mathbf{u} &= 0, & \mathbf{z} \in \Gamma_{cyl} \cup \Gamma_{top} \cup \Gamma_{bottom},
\end{align*}
where $U=1.5$ represents the maximum magnitude of the fluid velocity at the inlet and $\mathbf{n}$ is the outward normal vector at the boundary $\Gamma_{out}$. The initial velocity and pressure in $\Omega_{fluid}$ are taken to be zero, i.e., $\mathbf{u}(0, \mathbf{z}) = 0$, $p(0, \mathbf{z}) = 0$.

\begin{figure}[!t] 
	\centering
	
	\includegraphics[width=0.85\textwidth, trim=0 0 0 0, clip]{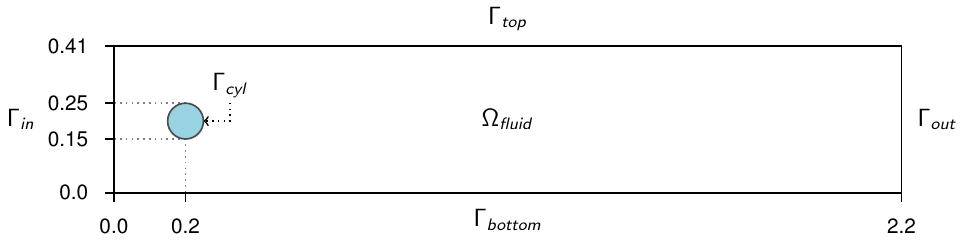}
	
	\caption{A representation of the spatial domain for the Navier-Stokes equations problem.}
	\label{fig:nse-setup}
\end{figure}

The domain $\Omega_{fluid}$ is discretized with tetrahedral mesh elements amounting to a total of \num{37514} grid nodes. The Navier-Stokes equations are numerically solved using the FEniCS package~\cite{fenics}. More specifically, our solver employs the finite element method and an incremental pressure correction scheme to handle the pressure-velocity coupling. For further details about the implementation, please refer to~\cite{morKapFB24}. We fix the fluid density to a value of $\rho = 1 \ kg/m^3$ and can freely vary the dynamic viscosity $\mu$, resulting in a variable Reynolds number between $[80, 200]$. In this work, we focus on constructing an actively-learned ROM for the fluid velocity magnitude over the complete domain $\Omega_{fluid}$.

\begin{figure}[!b] 
	\centering
	\begin{subfigure}[b]{0.35\textwidth}
		\centering
		\includegraphics[width=1.0\textwidth, trim=0 0 0 0, clip]{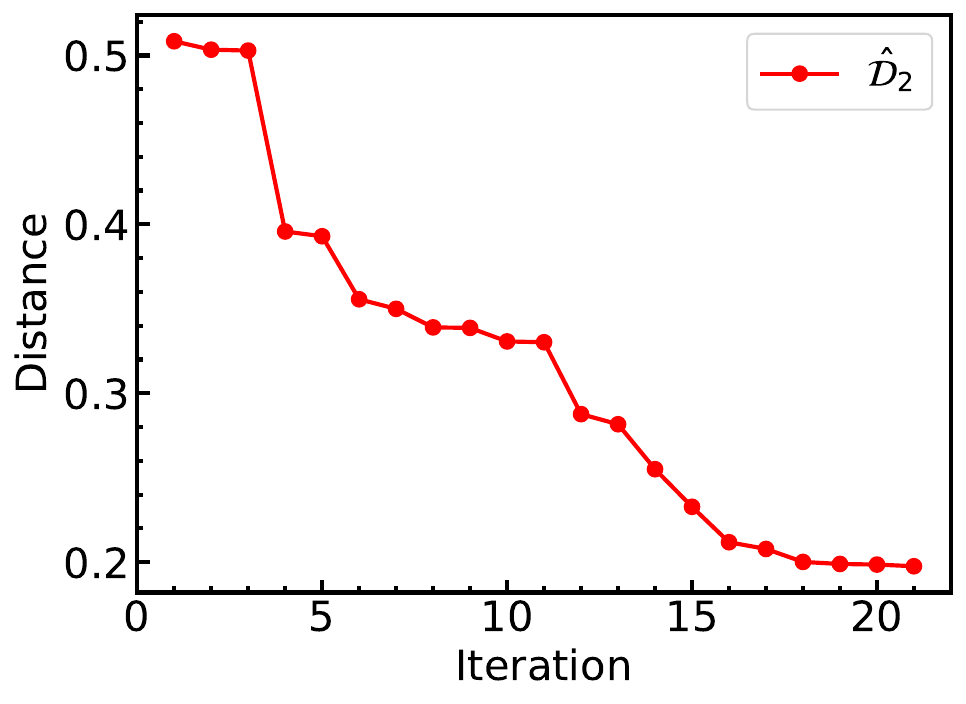}
		\caption{}
		\label{fig:nse-active-learn-a}
	\end{subfigure}
	\hspace{2em}
	\begin{subfigure}[b]{0.35\textwidth}
		\centering
		\includegraphics[width=1.0\textwidth, trim=0 0 0 0, clip]{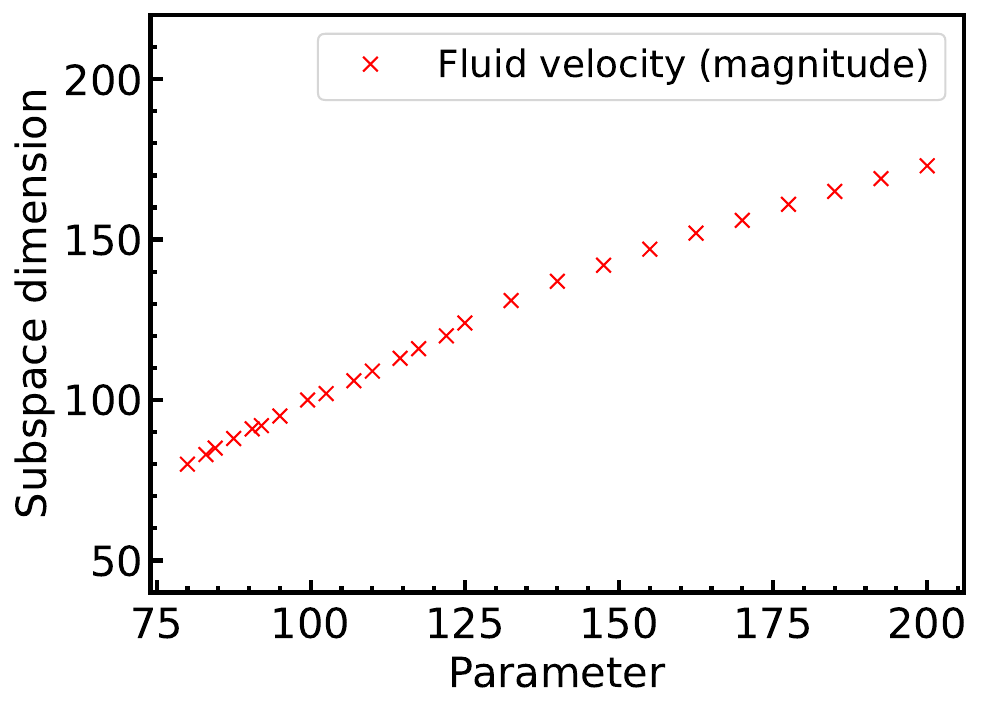}
		\caption{}
		\label{fig:nse-active-learn-b}
	\end{subfigure}
	\caption{Navier-Stokes equations: Subspace-distance-enabled active learning procedure with distance criterion $\hat{\mathcal{D}}_2$. (a)~shows the variation in the maximal distance value  $(\hat{\mathcal{D}}_2)_{max}^{(iter)}$ with iterations. For better readability, this is denoted by $\hat{\mathcal{D}}_2$ in the legend. (b)~shows the selected parameter samples along with the dimension of their respective POD subspace.} 
	\label{fig:nse-active-learn}
\end{figure}

To begin the active learning procedure, $81$ evenly distributed $Re$ values are considered from $[80, 200]$. Out of these, five $Re$ values, i.e., $\{80, 110, 140, 170, 200\}$, are initially taken in the parameter set $P$, whereas, the remaining $76$ $Re$ values are placed in the candidate set $P^*$. Furthermore, parameter-specific snapshot matrices for the fluid magnitude are constructed by querying the FOM solver corresponding to each of the $Re$ values from $P$. Similar to the shallow water equations example, parameter-specific subspaces are then formed, retaining $99.9999\%$ of the total energy, which corresponds to $\eta = 10^{-6}$.

With this problem, we intend to demonstrate the applicability of \Cref{alg:sde-actlearn-A} and terminate the active learning procedure as soon as a specific user-defined computational budget for FOM queries is satisfied. Moreover, the SDE-ActLearn-POD-NN ROM is constructed as outlined in \Cref{alg:SDE-ActLearn-POD-NN-offline-Variant-A}, where a deep NN is used to learn the POD coefficients. Once the initial set of parameter-specific subspaces are formed, the distances between all the neighboring subspaces are evaluated and the parameter pair with the most distant subspaces is identified. This is followed by the selection of a new sample from the candidate set $P^*$ as outlined in \Cref{alg:sde-actlearn-A}. In an iterative fashion, this procedure is continued, by updating the neighboring parameter pairs, evaluating the distance between new pairs, and selecting new samples from $P^*$ in parameter regions surrounded by subspaces that are farthest away.

\begin{figure}[!t]
	\centering
	\begin{subfigure}[b]{0.49\textwidth}
		\centering
		\includegraphics[width=1.0\textwidth, trim=0 0 0 0, clip]{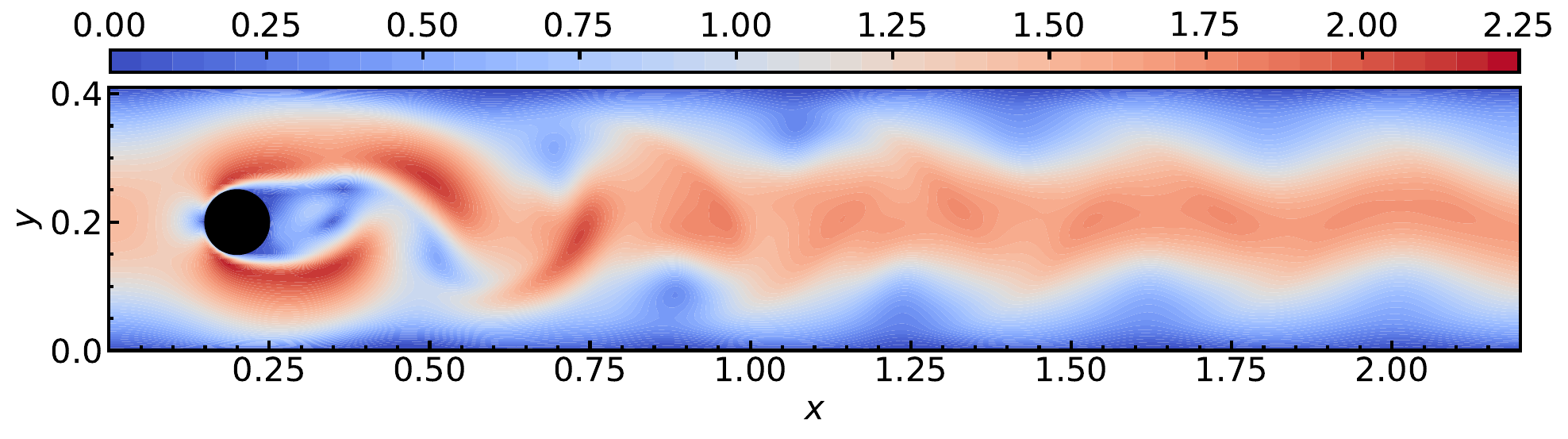}
	\end{subfigure}
	\begin{subfigure}[b]{0.49\textwidth}
		\centering
		\includegraphics[width=1.0\textwidth, trim=0 0 0 0, clip]{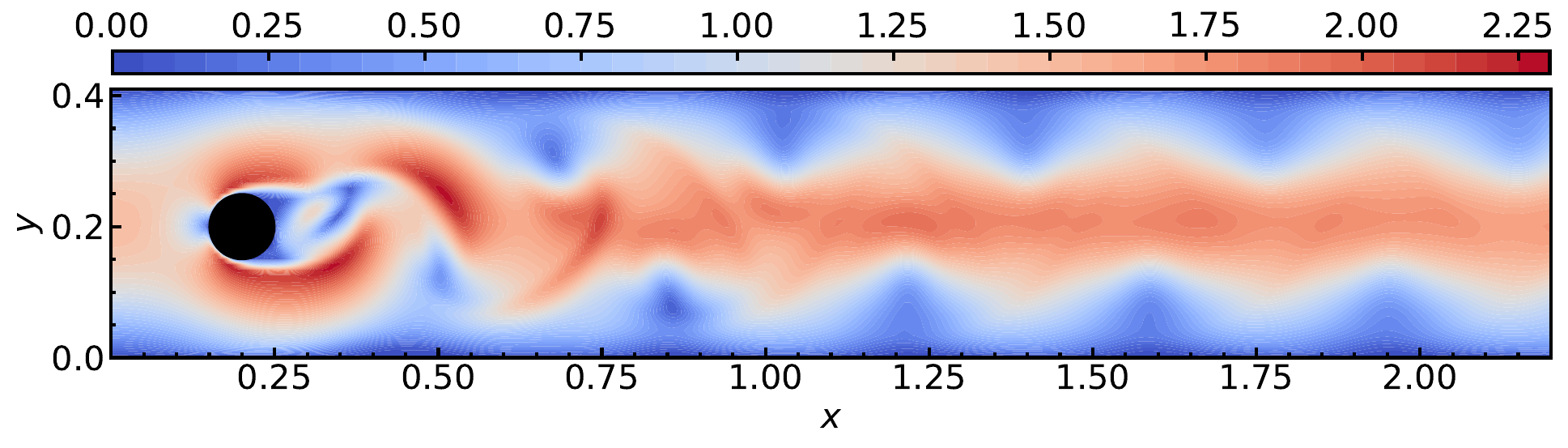}
	\end{subfigure}
	\begin{subfigure}[b]{0.49\textwidth}
		\centering
		\includegraphics[width=1.0\textwidth, trim=0 0 0 0, clip]{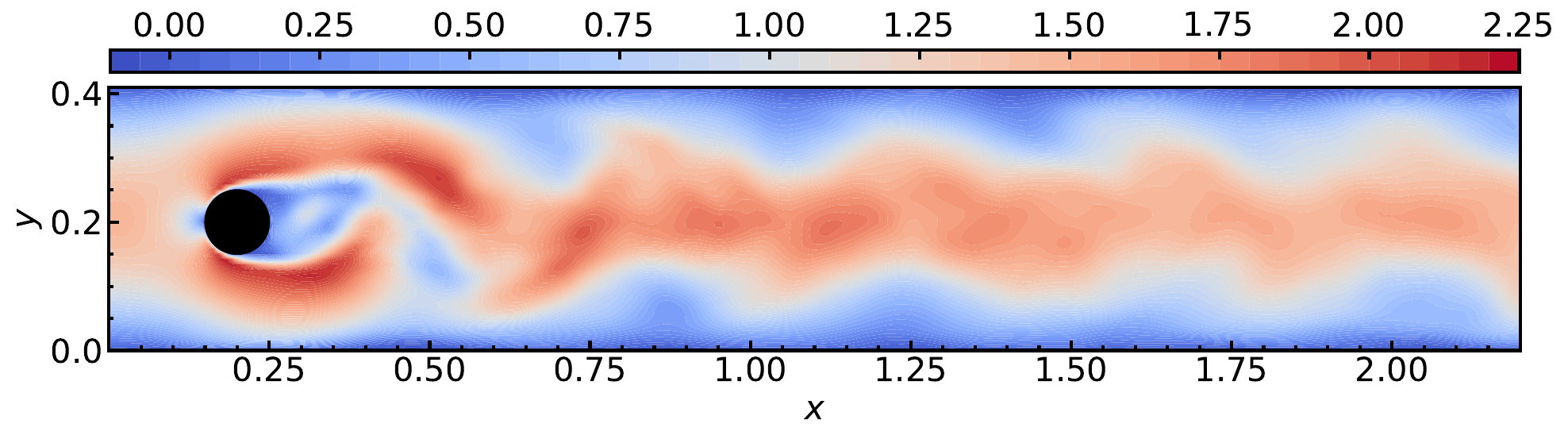}
	\end{subfigure}
	\begin{subfigure}[b]{0.49\textwidth}
		\centering
		\includegraphics[width=1.0\textwidth, trim=0 0 0 0, clip]{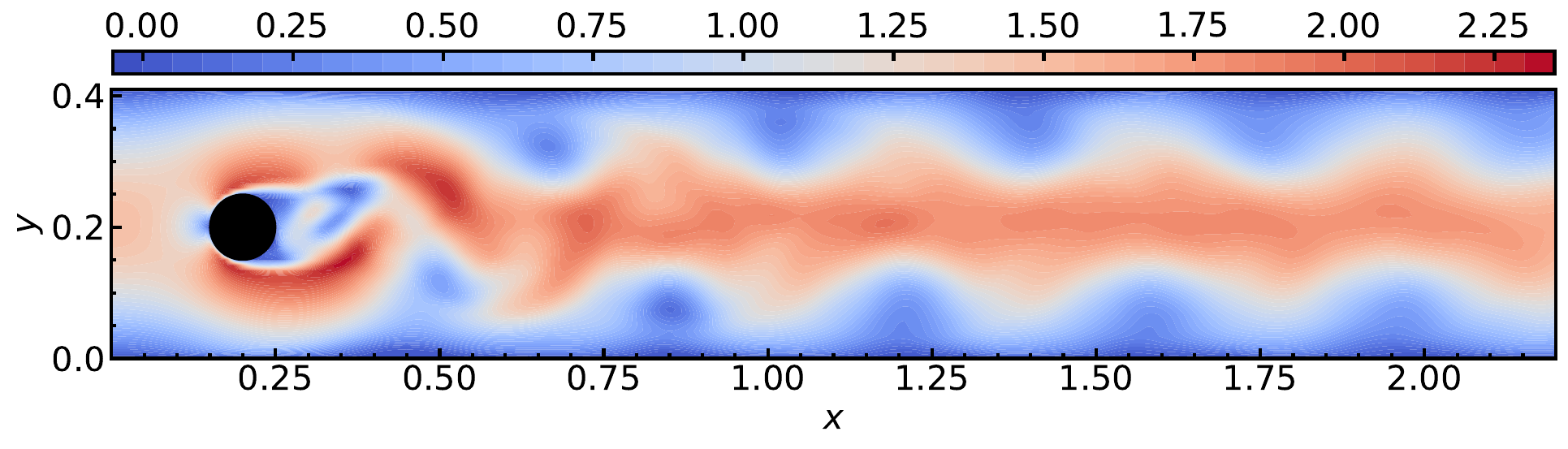}
	\end{subfigure}
	\begin{subfigure}[b]{0.49\textwidth}
		\centering
		\includegraphics[width=1.0\textwidth, trim=0 0 0 0, clip]{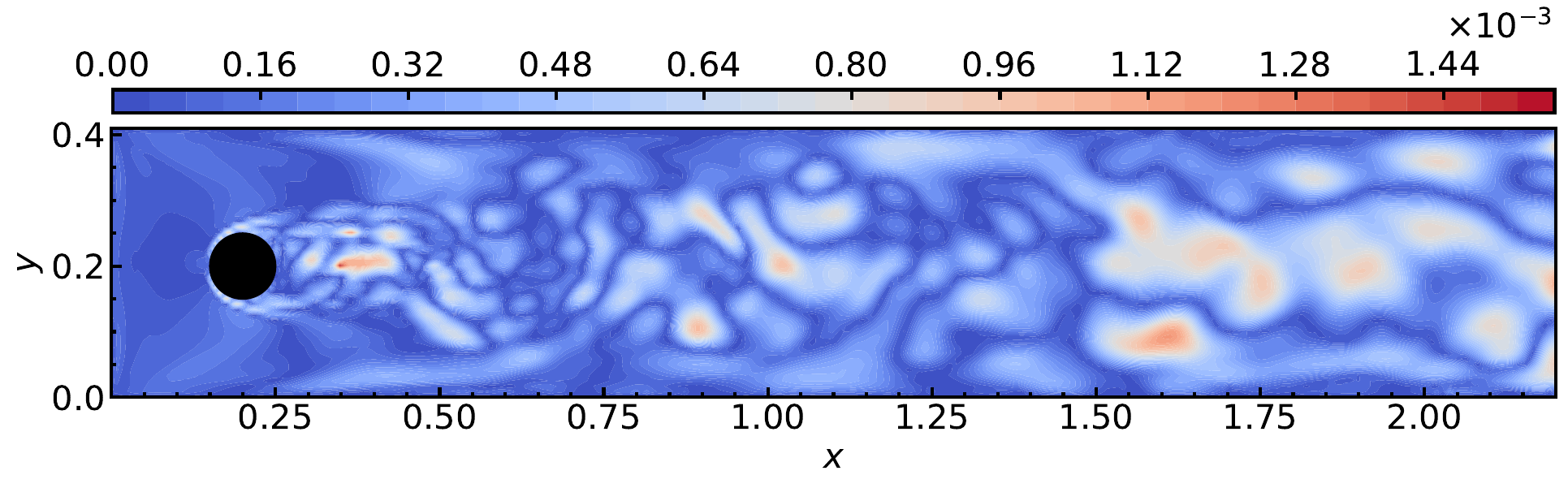}
	\end{subfigure}
	\begin{subfigure}[b]{0.49\textwidth}
		\centering
		\includegraphics[width=1.0\textwidth, trim=0 0 0 0, clip]{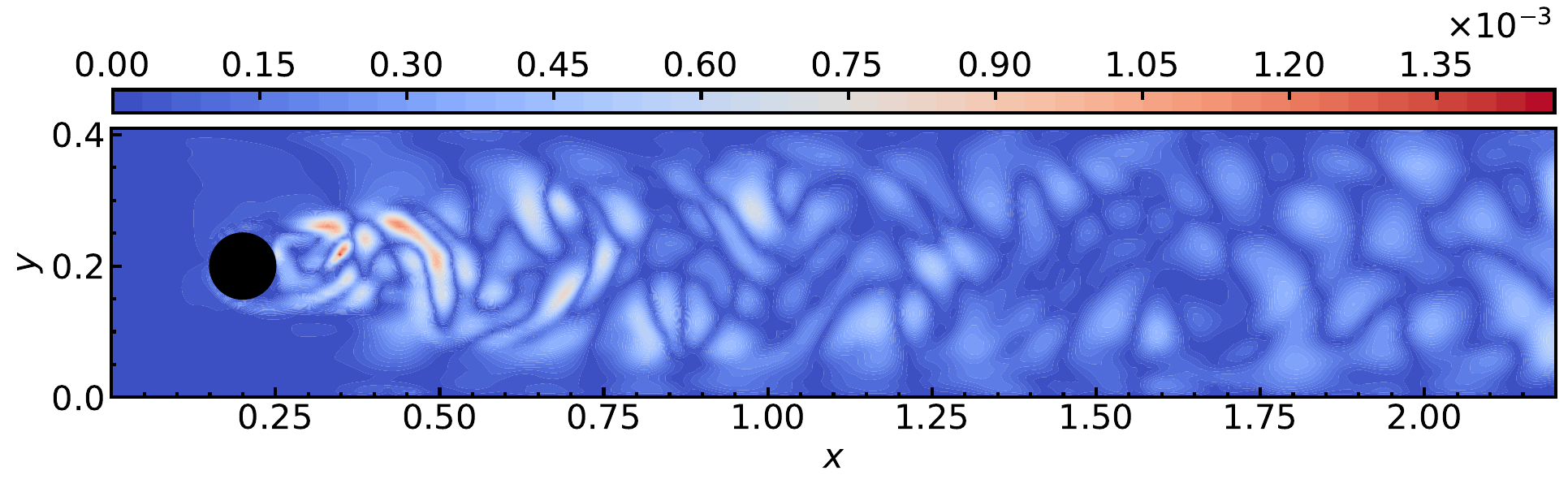}
	\end{subfigure}
	\caption{Navier-Stokes equations: The true solution (shown in first row), the SDE-ActLearn-POD-NN solution (shown in second row), and the relative solution error (shown in third row) at a representative test time instance $t=3.512$. The test $Re$ values in the left and right columns are $115$ and $180$, respectively.}
	\label{fig:nse-rom-sol}
\end{figure}

\begin{table}[!t]
	\begin{center}
		\begingroup
		\setlength{\tabcolsep}{10pt} 
		\renewcommand{\arraystretch}{1.5} 
		\begin{tabular}{| c || c | c | c | c |}
			\cline{1-5}
			\multirow{3}{*}{\begin{tabular}{@{}c@{}}FOM \\ solver\end{tabular}} & \multicolumn{4}{c|}{SDE-ActLearn-POD-NN ROM} \\ \cline{2-5}
			& \multicolumn{3}{c|}{Offline phase} & \multirow{2}{*}{\begin{tabular}{@{}c@{}}Online phase\end{tabular}} \\ 
			\cline{2-4}
			& SDE-AL & FOM queries & Remaining & \\
			\hline 
			\qty{38742}{\second} & \qty{59.8689}{\second} & \num{25} $\times$ \qty{38742.18}{s} & \qty{2117.5016}{\second} & \qty{0.0043}{\second} \\
			\hline
			\multicolumn{1}{c|}{} & \multicolumn{3}{c|}{Total: \qty{970732}{\second}} \\ 
			\cline{2-4}
		\end{tabular}
		\endgroup
		\caption{Navier-Stokes equations: Runtime of the FOM and that of the SDE-ActLearn-POD-NN ROM. The offline phase runtime for the ROM construction is reported in three parts: SDE-AL, FOM queries, and Remaining. The SDE-AL time corresponds to the time taken for creating the parameter-specific POD subspaces along with all distance evaluations during the active learning iterations. The time corresponding to FOM queries also includes the runtime of FOM simulations corresponding to the initial parameter set. The time required for all remaining computations in the offline phase is reported under `Remaining'.} %
		\label{tab:nse-timings}
	\end{center}
\end{table}

Since for the problem of shallow water equations, we already demonstrated the benefit of employing the distance measure $\hat{\mathcal{D}}_2$ \cref{eqn:distance-2-actlearn} during the SDE-AL process, we only employ $\hat{\mathcal{D}}_2$ in the SDE-AL procedure for this example. \Cref{fig:nse-active-learn-a} shows that the maximal distance monotonically decreases as the iterations proceed. With the computational budget set to $20$ queries of the FOM solver, the process terminates with the maximum distance between the subspaces being just under $0.2$. All the chosen parameter samples, along with their respective subspace dimensions, are shown in \Cref{fig:nse-active-learn-b}. We observe that lower values of $Re$ are slightly more preferred by the active learning process. This can be understood to be caused by a relatively greater variation in the solution features in this regime of the parameter space.
    
The SDE-ActLearn-POD-NN ROM is queried at the following out-of-training $Re$ values: $\{115,\allowbreak 145, 180\}$. \Cref{fig:nse-rom-sol} shows the fluid velocity predictions at two representative test parameter samples, corresponding to a representative time instance $t = 3.512$. The respective relative solution errors and the true FOM solutions are also shown. The reported relative error values are the absolute of the difference between the FOM and ROM solutions, normalized by the $2$-norm of the FOM solution. The reduced basis $\mathbb{V}$ is formed by retaining $99.9\%$ of the total energy, corresponding to $\hat{\eta} = 10^{-3}$, resulting in the reduced basis size of $174$. Upon assessing \Cref{fig:nse-rom-sol}, we can see that the actively built ROM is able to generate accurate solution profiles, successfully capturing the vortex shedding phenomenon in the cylinder wake.

\Cref{tab:nse-timings} provides runtime for the FOM solver and the SDE-ActLearn-POD-NN ROM. The FOM solver is executed on a node of the compute cluster Mechthild available at the Max Planck Institute Magdeburg. The node is equipped with two Intel{\small\textsuperscript{\textregistered}} Xeon Silver 4110 central processing units and $192$ GB of RAM. The deep NN used for this example is trained on an NVIDIA Tesla P100 graphics processing unit associated with a similar cluster node. Like the shallow water equations example, we report the time required for creating the parameter-specific subspaces along with all the distance evaluations during the active learning process under SDE-AL in \Cref{tab:nse-timings}. This amounts to $59.8689$ seconds. The runtime reported under `Remaining' corresponds to the global reduced basis construction in \cref{eqn:pod-nn-svd-2,eqn:pod-nn-bases-2}, NN data preparation, as well as the NN model construction and training. Out of this, the network training takes $2072.5892$ seconds. Overall, the offline phase cost is dominated by computations towards the FOM queries. The SDE-AL framework enables selection of critical parameter locations in the offline phase, thereby, rendering an effective ROM construction. In comparison with a FOM solver query, the ROM's online query time of $0.0043$ seconds provides us a computational speedup of greater than $\mathcal{O}(10^6).$

\FloatBarrier

\section{Conclusions}%
\label{sec:conclude}

We present a novel general-purpose active learning strategy for non-intrusive reduced-order modeling of parametric dynamical systems. The selection of critical parameter locations during the offline phase of constructing ROMs is driven by comparing the similarity between parameter-specific subspaces and iteratively picking new samples from regions of most dissimilar parametric solution fields. This is accomplished by developing a distance metric for evaluating the distance between linear subspaces of different sizes. Due to the distance metric being computationally inexpensive, it facilitates repeated distance evaluations between numerous subspace pairs efficiently, thereby, acting as the central pillar supporting the proposed subspace-distance-enabled active learning (SDE-AL) framework. 

Our ideas are successfully tested on two challenging fluid flow problems with competing convective and diffusive phenomena, where multiple shock profiles interact along the spatial domain in one problem, and in another problem, a flow scenario features a von K\'arm\'an vortex street with varying vortex shedding frequencies across the parameter domain. The results demonstrate the advantage of using the proposed distance metric for SDE-AL. Particularly, the maximal distance between the parameter-specific subspaces at each active learning iteration is observed to monotonically decrease as new parameter samples are progressively selected. Moreover, we notice that the sampling pattern in the parameter domain is robust to the level of truncation used for forming the parameter-specific subspaces. As long as the subspaces are expressive enough to reasonably describe the solution dynamics, we can reliably sample critical parameter locations without needing to worry about the level of information retained in the parametric solution subspaces.

Both variants of SDE-AL---either termination after meeting a user-defined computational budget for high-fidelity solver queries or termination upon sufficing user-defined distance and error tolerances---exhibit good performance with accurate ROM constructions. Moreover, the SDE-AL strategy is flexible and, in principle, can be applied to any reduced-order modeling technique. We showcased its application to two non-intrusive reduced-order modeling methods, namely POD-KSNN and POD-NN, proposing their efficient active-learning-driven counterparts, i.e., SDE-ActLearn-POD-KSNN and SDE-ActLearn-POD-NN ROMs. Compared to the high-fidelity solvers, across both problems, the ROMs constructed using SDE-AL render a considerable computational speedup for solution predictions at new parameter locations. More importantly, the SDE-AL framework improves the offline ROM construction phase by effectively limiting the number of parametric high-fidelity solution snapshots required. Hence, the proposed active learning strategy is a promising candidate for augmenting non-intrusive ROMs intended for parametric many-query applications that require an effective generation of solution dynamics across numerous locations in the parameter domain.


\section*{Code and Data Availability}
\addcontentsline{toc}{section}{Code and Data Availability}

The source code and data utilized for the numerical experiments are available for review upon request and will be made publicly available upon publication of this manuscript. %


\section*{Acknowledgments}%
\addcontentsline{toc}{section}{Acknowledgments}

Harshit Kapadia is supported by the International Max
Planck Research School for Advanced Methods in Process
and Systems Engineering (IMPRS-ProEng). This work was also supported by the research initiative “SmartProSys: Intelligent Process Systems for the Sustainable Production of Chemicals” funded by the Ministry for Science, Energy, Climate Protection and the Environment of the State of Saxony-Anhalt.

%
%

\appendix

\FloatBarrier

\section{Implementation Details} %
\label{appendix:implementation-details}

We provide details about the KSNN setups used for the shallow water equations example and the NN setup used for the Navier Stokes equations example in \Cref{tab:implementation-details}. Instead of training for locally-adaptive kernel widths using the ADSIT procedure from~\cite{morKapFB24}, we specify a uniform width for all the activation kernels in our KSNN constructions, which is denoted by $\epsilon$ in the table. To train the deep feed-forward NN, the learning rate is initially specified as $0.01$, which is subsequently halved at every $1000$ epochs. The Adam optimizer~\cite{adam-optimizer} is used to train the network. 
\begin{table}[!h] 
	\begin{center}
		\begingroup
		\setlength{\tabcolsep}{5pt} 
		\renewcommand{\arraystretch}{1.5} 
		\begin{tabular}{|c||c|c|c|c|c|} 
			\cline{1-6}
			Model $\&$ Network & Activation & Epochs & $\epsilon$ & \# layers & Layer width \\
			\cline{1-6}
			\noalign{\vskip\doublerulesep\vskip-\arrayrulewidth} 
			\cline{1-6}
			SWE: $\mu$-KSNN & Multi-quadric & $-$ & $10^{-3}$ & $3$ & $19$ \\
			\cline{1-6}
			SWE: $t$-KSNN & Multi-quadric & $-$ & $10^{-3}$ & $3$ & $200$ \\
			\cline{1-6}
			NSE: NN & ReLU & $3000$ & $-$ & $6$ & $300$ \\
			\cline{1-6}
		\end{tabular}
		\endgroup
		\caption{KSNN and NN implementation details. The numerical examples are denoted as follows: SWE: Shallow water equations; NSE: Navier-Stokes equations. Note that the number of layers reported include the input and output layers. The reported layer widths correspond to the widths of the hidden layers in the respective networks.}  
		\label{tab:implementation-details}
	\end{center}
\end{table}


\addcontentsline{toc}{section}{References}
\bibliographystyle{plainurl}
\bibliography{journals,refs2}
  
\end{document}